\documentclass{article}

\usepackage{amsmath}
\usepackage{graphicx}
\usepackage[colorlinks=true, allcolors=blue]{hyperref}
\usepackage[dvipsnames,svgnames,table]{xcolor}
\usepackage{sectsty}
\usepackage{multirow}
\usepackage{authblk}
\usepackage{relsize}
\usepackage[T1]{fontenc} 
\usepackage[english]{babel} 
\usepackage{amsmath,amsfonts,amsthm, amssymb} 
\usepackage{graphicx}
\usepackage{booktabs}
\usepackage[font=small]{subcaption}
\usepackage[font=small]{caption}
\usepackage{svg}
\usepackage{wrapfig}
\usepackage{bm}
\usepackage{soul} 
\usepackage{lipsum} 
\usepackage{sectsty} 
\usepackage{fancyhdr} 
\usepackage[T1]{fontenc} 
\usepackage[english]{babel} 
\usepackage{framed}
\usepackage{bm}
\usepackage{fancyhdr}
\usepackage{etoolbox}
\usepackage{titlesec}
\usepackage{titletoc}
\usepackage{float}
\usepackage{xcolor}

\newtheorem{theorem}{Theorem}[section]
\newtheorem{lemma}{Lemma}[section]
\newtheorem{proposition}{Proposition}[section]
\newtheorem{corollary}{Corollary}[section]

\theoremstyle{definition}

\theoremstyle{remark}
\newtheorem{remark}{Remark}

\title{Local reconstruction  of coefficients  in quantitative photoacoustic tomography}

\author{Gang Bao\thanks{Department of Mathematics, Zhejiang University, Hangzhou 310027, China (Baog@zju.edu.cn). \url{http://www.mathweb.zju.edu.cn:8080/bao/}}   \and
Mirza Karamehmedovi\'c\thanks{Department of Applied Mathematics, Technical University of Denmark, Kgs. Lyngby, Denmark (mika@dtu.dk). \url{https://www.dtu.dk/english/person/mirza-karamehmedovic?id=20347&entity=publicationsOrcid}} \and Faouzi Triki\thanks{Laboratoire Jean Kuntzmann,
Grenoble Alpes University, 700 Avenue Centrale, 38401 Saint-Martin-d'Hères, France, (Faouzi.Triki@univ-grenoble-alpes.fr). \url{https://membres-ljk.imag.fr/Faouzi.Triki/}} }

\begin{document}
\maketitle

\begin{abstract}
We study the problem of reconstructing the scattering and absorption coefficients in the radiative transport  equation using internal data in quantitative photoacoustic tomography (QPAT). In practical  settings, however, this internal data is only partially available near the boundary, owing to medium’s strong absorption and limitations of the measurement equipment. Our main contribution 
is the development of a method to recover these coefficients within a subregion where the internal 
data can be obtained with sufficient reliability.

\end{abstract} 

\section{Introduction}


Photoacoustic tomography (PAT) is a hybrid medical imaging technique which combines the high contrast of optical parameters 
with the high resolution of ultrasonic waves.
In PAT, near infra-red  photons are sent into the biological tissue which is heated up due to the 
absorption of the energy.
The heating then results in the expansion of the tissue which generates a pressure field.
The measurement of the pressure field on the boundary is
 then used to reconstruct the optical properties of the tissue.\\

\noindent The inverse problem of PAT can be decomposed into two steps \cite{beard2011biomedical,wang2017photoacoustic}.
The first step is to reconstruct the absorbed radiation map  from the measurement of ultrasonic waves on the boundary \cite{kian2024recovery,tarvainen2024quantitative,ammari2011time,kuchment2008mathematics}. 
The second step is to reconstruct the scattering  coefficient  and the absorption coefficient 
through the internal data obtained in the first step
\cite{bal2011multi,bal2012multi,ammari2011mathematical,bal2010inverse,saratoon2013gradient, naetar2014quantitative, arridge1999optical,gao2011quantitative, haltmeier2015single}. Let $\Omega \subset  {\mathbb R}^n$ ($n=2, 3$) be the domain of interest with smooth boundary $\partial\Omega$ and outward normal vector $\nu$, and let ${\mathbb S}^{n-1}$ be the unit sphere in ${\mathbb R}^d$. The propagation of near infra-red  photons in the direction $d \in
\mathbb S^{n-1}$ in biological tissues, 
with scattering and absorption coefficients $\mu_s$ and  $\mu_a$, respectively, is modeled by the radiative transport equation \cite{arridge2009optical,bal2008stability}:
\begin{equation}\label{RDT:Equation}
	\begin{array}{rcll}
  	d \cdot\nabla u(x,d) + 
  	(\mu_a(x)+\mu_s(x))u(x,d)
  	&=&  K(u)(x, d)
  	&\mbox{ in }\ \Omega\times \mathbb S^{n-1}\\
       u(x,d) &=& g(x,d) &\mbox{ on }\ \Gamma_{-},
	\end{array}
\end{equation}
where $\Gamma_{-} = \cup_{d\in \mathbb S^{n-1}}\Gamma_-(d) \times\{d\}$ and $\Gamma_{-}(d)$ ($\Gamma_+(d)$) is the incoming (outgoing) boundary defined by 
\begin{equation} \label{Def:Gamma}
\Gamma_{\pm}(d) = \left\{x\in \partial \Omega: \quad \nu(x)\cdot d\,\, \raisebox{-1ex}{$\overset{\displaystyle >}{<}$}\,\,0 \right\},
\end{equation}
and $g$ is the incoming illumination  source. The scattering operator $K$ is defined as
\begin{equation}
K(u)(x, d)= \mu_s(x)\int_{\mathbb S^{n-1}} k(d, d^\prime)u(x,d^\prime) ds(d^\prime), \quad (x,d) \in \Omega\times \mathbb S^{n-1}.
\end{equation}
where  $k(d, d^\prime)$ is non-negative, symmetric,  and represents the scattering phase function, a probability density function which describes 
the probability  that a photon traveling in a direction $d^{\prime}$ will be scattered into a direction $d$. It satisfies 
\begin{equation} \label{Cond:k}
\int_{\mathbb S^{n-1}} k(d, d^\prime)ds(d^\prime) = 1, \quad \forall d\in \mathbb S^{n-1}. 
\end{equation}
In practical applications in biomedical optics, $k$ is often taken to be the Henyey-Greenstein phase function which depends only on the product 
$d\cdot d^\prime$ \cite{arridge1999optical,welch2011optical}: 
\begin{eqnarray}
k(d, d^\prime)=  
\left\{ \begin{array}{llcc}
\frac{1}{2\pi} \frac{1-\eta^2}{1+\eta^2-2\eta d\cdot d^\prime}, \quad n=2\\
\frac{1}{4\pi} \frac{1-\eta^2}{(1+\eta^2-2\eta d\cdot d^\prime)^{\frac{3}{2}}}, \quad n=3,
\end{array}
\right.
\end{eqnarray}
\noindent
\noindent with $\eta \in (-1,1)$ denoting the dimensionless scattering anisotropy parameter. This model simplifies the 
description of light scattering by characterizing it with a single parameter $\eta$. In biological tissues, scattering is typically strongly 
forward-directed, so that $\eta \in (0,1)$ \cite{jacques2013optical}. In other words, light propagating through tissue tends to preserve its 
original direction of propagation. As $\eta$ approaches $1$, scattering becomes increasingly forward-peaked, leading to a narrower and more collimated beam.
As will be shown in Section \ref{Sec:forward}, the exponential decay rate of the illumination intensity along the beam propagation direction depends on 
how close $\eta$ is to $1$. The anisotropy parameter $\eta$ is also used to define the so-called effective scattering coefficient through $\mu_s^\prime$ = $(1-\eta) \mu_s$. Notice that  $\mu_s^\prime$ tends to zero when $\eta$ 
approaches $1$. 

\begin{figure}
    \centering
    \includegraphics[width=0.49\linewidth]{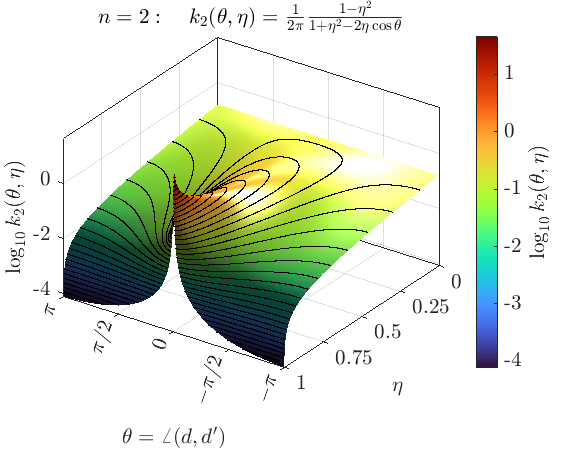}        \includegraphics[width=0.49\linewidth]{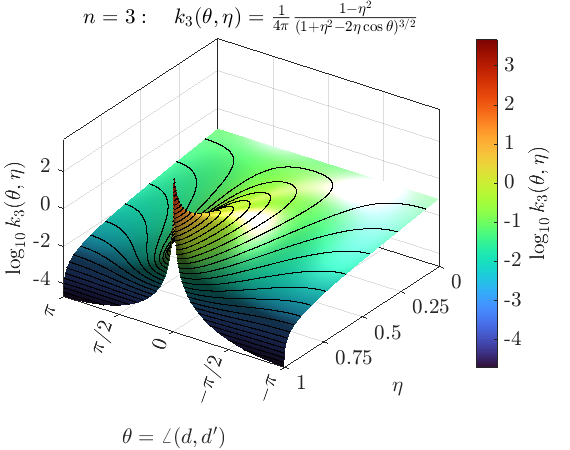}
    \caption{The Henyey-Greenstein phase function describes increasingly forward-peaked scattering as $\eta\nearrow1$.}
    \label{fig:placeholder}
\end{figure}

\noindent The absorbed optical energy density $H(x,g)$ for a given boundary illumination $g$, is defined by
\begin{equation} \label{Data}
H(x, g) = \mu_a(x)\int_{\mathbb S^{n-1}} u(x, d)ds(d), \quad x\in \Omega.
\end{equation}

\noindent The optical inverse problem of QPAT is to recover $(\mu_a, \mu_s)$ from the knowledge of the map $g\mapsto H(\cdot, g)$. \\

\noindent In reality,  the recovered internal data takes the form  $\Upsilon(\cdot) H(\cdot ,g)$,  where $\Upsilon$ is the non-dimensional Gr\"uneisen coefficient, representing the photo-acoustic efficiency of the tissue in the present model. In \cite{bal2011multi,bal2012multi}, the authors showed, within the framework of a diffusion model, that the Gr\"uneisen parameter $\Upsilon$ and the optical coefficients $(\mu_a,\mu_s)$ cannot be simultaneously recovered unless additional information is available. In this work, we assume that $\Upsilon$ is known and focus exclusively on the reconstruction of the optical coefficients $(\mu_a,\mu_s)$.\\

\noindent In practice, numerous experimental studies have demonstrated that the imaging depth of photoacoustic imaging -defined as the maximum depth at which structures can be resolved with a prescribed spatial resolution- remains relatively limited, typically to only a few centimeters \cite{liu2024imaging}. This limitation primarily stems from the restricted penetration of diffusive near-infrared photons, whose intensity is strongly attenuated by both absorption and scattering mechanisms, a difficulty that is also encountered in optical tomography \cite{arridge1999optical}. As a consequence, the generated ultrasound signals decay rapidly with increasing depth, resulting in a significant deterioration of image quality in deeper regions. This phenomenon has been rigorously established from a mathematical perspective in \cite{bonnetier2022stability,triki2021holder}.\\
\noindent Most existing reconstruction methods rely on the solution of nonlocal partial differential equations and therefore require measurements throughout the entire imaging domain, including regions where the acoustic signals are weak and heavily contaminated by noise \cite{ren2006frequency,haltmeier2015single,bal2011multi,bal2012multi}. Consequently, the reconstructed images may suffer from poor overall quality, even when the structures of interest lie within shallower regions where the desired spatial resolution can still be achieved.  \\

\noindent The objective of this work is to simplify the reconstruction of the optical coefficients in the region of interest. We begin by introducing the function spaces on which the map ($g \mapsto H(\cdot,g)$) is rigorously defined in Section \ref{Sec:forward}. We then establish the well-posedness of the forward problem. More precisely, using its integral formulation, we derive in Theorem \ref{main1} a new weighted $L^\infty$-estimate for the solution of the radiative transport equation \eqref{RDT:Equation}. This estimate, in particular, demonstrates the exponential decay of the solution along characteristics.
In Section \ref{Sec:asymptotic}, we derive asymptotic expansions for both $u$ and $H$ with respect to the small depth parameter $\tau_-(x,d)$, which measures the distance to the inflow boundary along characteristics (Theorem \ref{tD:asymptotic} and Corollary \ref{tH:asymptotic}). The leading-order term of the measured data identifies the local absorption coefficient, while the next-order correction captures the contribution of scattering. This analysis yields an explicit local approximation of the measurement map in terms of $\mu_a$, $\mu_s$, and source-dependent directional moments. These leading-order expansions are then exploited in Section \ref{Sec:inverse}   to solve the inverse problem. Finally, numerical experiments presented in Section \ref{sec:numerical} validate the asymptotic model and demonstrate stable recovery of the optical coefficients in shallow subregions near the boundary.

\section{The Forward Problem} \label{Sec:forward}
Here we will study the well-posedness of the system \eqref{RDT:Equation}. For general coefficients, the forward problem  may not be uniquely solvable.
We further assume that 
\begin{itemize}
\item[i)] $g\in L^\infty(\Gamma_-) $  where $\Gamma_- = \cup_{d\in \mathbb S^{n-1}}\Gamma_-(d) \times\{d\}$. 
\item[ii)] $ \mu_a, \mu_s \in L^\infty(\Omega)$.
\item[iii)] $\mu_s\geq 0$ and $\mu_a\geq c_0$ for some positive constant $c_0$. 
\end{itemize}

Let us introduce some notations. For $d\in \mathbb S^{n-1}$, set $$\tau_\pm(x,d) = \min\left\{t\geq 0: x\pm td \notin \Omega\right\},$$ and write $\tau(x,d) = \tau_-(x,d)+\tau_+(x,d)$ for the length of the line segment through $x$ in the direction $d$ completely contained in $\Omega$. Precisely, $\tau_+(x,d)$  (resp. $\tau_-(x,d)$) is the distance it takes a particle at  $x$ traveling in direction $d$ (resp. $-d$) to reach the boundary of the domain. \\

\noindent 
We further assume that for every point $x\in \Omega$ and any direction $d\in \mathbb S^{n-1}$,  there exists $(x^\prime, d) \in \Gamma_-$ such that 
$$
x= x^\prime +td, \quad   \textrm{with} \quad  t\in (0, \tau(x,d)).
$$

\noindent Notice that
\begin{eqnarray*}
 \tau_\mp(x^\prime \pm td,d)=t,  \qquad \forall t\in (0, \tau_\pm(x^\prime, d)),\; \forall x^\prime \in \Gamma_\mp(d),\\
 \tau(x^\prime \pm td,d) = \tau(x^\prime,d), \qquad \forall t\in (0, \tau_\pm(x^\prime, d)),\; \forall x^\prime \in \Gamma_\mp(d).
\end{eqnarray*}

\noindent Denote
\[
\mu = \mu_a+\mu_s,
\]
the coefficient of the zero order in the radiative transport equation. To proceed with the analysis, we reformulate the radiative transport equation \eqref{RDT:Equation}  as an equivalent integral equation. For all $(x', d)\in\Gamma_-$, and $t\in[0,\tau(x',d)]$, consider an inward extension of the boundary datum
\begin{eqnarray}\label{eq:extension}
(J g)(x^\prime+td, d) = e^{-\int_0^t \mu(x^\prime+s d) ds} g(x^\prime,d), 
\end{eqnarray}
as well as a lifting
\begin{align}\label{eq:explicitA}
Lf(x^\prime +td,d) = \int_0^t e^{-\int_s^t \mu(x^\prime+rd) dr } f(x^\prime+s d,d) ds. 
\end{align}
By elementary calculations one can verify that 
\begin{equation}
(d \cdot \nabla + \mu) J g \;= 0, \qquad  Jg |_{\Gamma_-} = g,
\end{equation}
and 
\begin{equation}
(d \cdot \nabla  +\mu) L f \;= f, \qquad  L f |_{\Gamma_-} = 0.
\end{equation}
This means that the extension $J g$ of the boundary datum is in the kernel of the differential operator and that the 
lifting $L$ is its right inverse.
The radiative transfer problem is thus equivalent with the integral equation \cite{bal2008stability,choulli1999inverse,egger2014stationary}
\begin{align} \label{eq:contraction}
u = L K u + J g,
\end{align}
and the unique solvability of \eqref{RDT:Equation} is equivalent with the invertibility of the linear operator $I- LK$ in a given Banach space. To show the latter, we use the following  technical lemmas.\\
For $ d\in \mathbb S^{n-1}$ and $ x\in \Omega$, denote $\Delta_-(x,d) = \{x-td: \; t\in [0, \tau_-(x,d)] \}$ the segment joining $x$
to $x^\prime= x-\tau_-(x,d)d$ on $\Gamma_-(d)$.
\begin{lemma}[]\label{lem:contraction1}
Let $\alpha \in L^\infty(\Omega\times \mathbb S^{n-1})$ satisfy
\begin{equation*}
0\leq \alpha(x, d) \leq \min_{\Delta_-(x,d)} \mu_a, \quad \forall x \in \Omega, \; \forall d \in \mathbb S^{n-1}.
\end{equation*}

  For any $\phi\in L^\infty(\Omega\times \mathbb S^{n-1})$  there holds
  \begin{equation} \label{contraction1}
    \|e^{\alpha \tau_-} L (\mu_s\phi)\|_{L^\infty(\Omega \times {\mathbb S}^{n-1} )}\leq \big(1-e^{-\|\tau_- \mu_s \|_{L^\infty}}\big) \|e^{\alpha \tau_-} \phi\|_{L^\infty(\Omega \times {\mathbb S}^{n-1} )}.
  \end{equation}
\end{lemma}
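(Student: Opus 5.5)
Fix $(x,d)$ and write $x = x' + td$ with $x'\in\Gamma_-(d)$ and $t=\tau_-(x,d)$; note $\tau_-(x'+sd,d)=s$ for $s\in[0,t]$. Set $M = \|e^{\alpha\tau_-}\phi\|_{L^\infty}$. I will work pointwise from the explicit formula \eqref{eq:explicitA}:
\[
e^{\alpha(x,d) t}\,|L(\mu_s\phi)(x,d)| \le \int_0^t e^{\alpha(x,d)t - \int_s^t \mu(x'+rd)\,dr}\,\mu_s(x'+sd)\,|\phi(x'+sd,d)|\,ds .
\]
The plan is to bound $|\phi(x'+sd,d)| \le M e^{-\alpha(x'+sd,d)s}$. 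After that, the weight and the absorption part of $\mu=\mu_a+\mu_s$ should cancel, leaving only an integral in $\mu_s$ that can be computed exactly.

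The main obstacle is that the weight $\alpha$ is evaluated at different points, $x$ and $x'+sd$. Here I need a pointwise comparison between $\alpha(x,d)$ and $\alpha(x'+sd,d)$, or else a direct use of the hypothesis. The hypothesis gives $\alpha(x,d)\le \mu_a$ on the whole segment $\Delta_-(x,d)$, which contains $\{x'+rd: r\in[s,t]\}$. Hence
\[
\alpha(x,d)(t-s) \le \int_s^t \mu_a(x'+rd)\,dr .
\]
This handles the part of the weight from $s$ to $t$. For the remaining factor $e^{\alpha(x,d)s - \alpha(x'+sd,d)s}$ I need $\alpha(x,d)\le\alpha(x'+sd,d)$. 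This fails for an arbitrary $\alpha$. It does hold for the natural choice $\alpha(x,d)=\min_{\Delta_-(x,d)}\mu_a$, since $\Delta_-(x'+sd,d)\subset\Delta_-(x,d)$ makes the minimum monotone nonincreasing along the characteristic.

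So I will first prove the estimate for $\alpha$ nonincreasing along characteristics (in particular the extremal choice), and remark that this is the case intended. Alternatively, for a general admissible $\alpha$, I would absorb the discrepancy by a careful reading of the statement with $\alpha$ constant along rays. I would flag this point explicitly as the delicate step.

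With these bounds, the integrand is at most
\[
M\,\mu_s(x'+sd)\,e^{-\int_s^t\mu_s(x'+rd)\,dr}.
\]
This is exactly $M\,\frac{d}{ds}e^{-\int_s^t \mu_s(x'+rd)dr}$, so the integral equals
\[
M\big(1-e^{-\int_0^t\mu_s(x'+rd)\,dr}\big).
\]
Finally, $\int_0^t \mu_s(x'+rd)\,dr \le t\,\|\mu_s\|_{L^\infty}$, and the stated constant $\|\tau_-\mu_s\|_{L^\infty}$ is read as the sup over $(x,d)$ of this line integral, which is bounded by $\tau_-\|\mu_s\|_\infty$. Since $1-e^{-y}$ is increasing, the bound $\big(1-e^{-\|\tau_-\mu_s\|_{L^\infty}}\big)M$ follows. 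Taking the supremum over $(x,d)$ gives \eqref{contraction1}.
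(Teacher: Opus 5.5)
Your argument is the paper's argument: a pointwise estimate along the characteristic through $x$, cancellation of the absorption part of $\mu$ against the weight via $\alpha\le\mu_a$ on $\Delta_-(x,d)$, the bound $|\phi(x'+sd,d)|\le Me^{-\alpha s}$, and exact integration of $\mu_s e^{-\int_s^t\mu_s}$. The two steps you flag are genuine defects of the statement, which the paper's proof passes over silently; they are not weaknesses of your proof.

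For the weight, the paper writes $e^{-\alpha(t-s)}$ and then $e^{\alpha s}|\phi(x'+sd)|\le\|e^{\alpha\tau_-}\phi\|_{L^\infty}$ with one and the same number $\alpha$. That is, it tacitly makes your monotonicity assumption, and without it the inequality is false. Fix $t_0$ smaller than the diameter of $\Omega$, let $a\ge 1/t_0$, and take $\mu_a=\mu_s\equiv a$, $\alpha=a\,\mathbf{1}_{\{\tau_-\ge t_0\}}$, $\phi=\mathbf{1}_{\{\tau_-<t_0\}}$. Then $\|e^{\alpha\tau_-}\phi\|_{L^\infty}=1$. On $\{\tau_-\ge t_0\}$ one computes $e^{\alpha\tau_-}L(\mu_s\phi)=\tfrac12 e^{-a\tau_-}(e^{2at_0}-1)$, which tends to $\sinh(at_0)\ge\sinh 1>1$ as $\tau_-\downarrow t_0$, while the right-hand side is below $1$. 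So drop the alternative route for general $\alpha$ and state the lemma for $\alpha$ nonincreasing along rays. This covers $\min_{\Delta_-(x,d)}\mu_a$ and the constant $\alpha$ actually used in Lemma~\ref{lem:contraction_infty} and Theorem~\ref{main1}.

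Your reinterpretation of the constant is likewise necessary rather than cosmetic. The paper's last line needs $\int_0^t\mu_s(x'+rd)\,dr\le\|\tau_-\mu_s\|_{L^\infty}$, which holds for constant $\mu_s$ but not in general. In the ball $B_\rho$ centred at $0$, take $\mu_s(y)=S/(\rho+|y|)$. Since $\sup_d\tau_-(y,d)=\rho+|y|$, this gives $\|\tau_-\mu_s\|_{L^\infty}=S$, yet the integral of $\mu_s$ along a diameter is $2S\ln 2$. With $\alpha=0$, $\phi\equiv1$ and a sufficiently small constant $\mu_a$, the left-hand side can then be made arbitrarily close to $1-4^{-S}>1-e^{-S}$. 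Your constant $\sup_{(x,d)}\int_0^{\tau_-(x,d)}\mu_s(x-rd)\,dr$ is correct, as is the cruder $\|\tau_-\|_{L^\infty}\|\mu_s\|_{L^\infty}$. If you want a genuine norm, $\|\tau\mu_s\|_{L^\infty}$ with the full chord length $\tau=\tau_-+\tau_+$ also works, because $\tau$ is constant along each characteristic.
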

\begin{proof}
Let $x\in \Omega$, $d\in \mathbb S^{n-1}$, and  set $x^\prime= x- \tau_-(x,d)d$.
 For $0 < t < \tau_-(x,d)$, we have 
\begin{eqnarray*} 
 |(L(\mu_s\phi)(x^\prime + td,d)| =  \left|\int_0^t e^{-\int_s^t \mu(x^\prime+rd)dr}\mu_s(x^\prime+sd)  \phi(x^\prime+sd)ds\right|\\
    \leq \int_0^t e^{-\alpha(t-s)}  e^{-\int_s^t \mu_s(x^\prime+rd)dr} \mu_s(x^\prime+sd) \left| \phi(x^\prime+sd)\right| ds.
\end{eqnarray*}
 Therefore, for each $t\in[0,\tau_-(x,d)]$ we have
  \begin{eqnarray*}
    |e^{\alpha t} (L (\mu_s\phi))(x^\prime + td,d)| \leq 
    \int_0^t e^{-\int_s^t \mu_s(x^\prime+rd)dr}\mu_s(x^\prime+sd) e^{\alpha s}
    \left|\phi(x^\prime+sd)\right| ds\\
    \leq
    \int_0^t e^{-\int_s^t \mu_s(x^\prime+rd)dr}\mu_s(x^\prime+sd)ds \|e^{\alpha \tau_-} \phi\|_{L^{\infty}(\Omega \times {\mathbb S}^{n-1} )}.
          \end{eqnarray*}
  We finally obtain 
        \begin{eqnarray*}
 |e^{\alpha \tau_-(x,d)}(L(\mu_s\phi)(x,d)| \leq \big(1-e^{-\|\tau_- \mu_s \|_{L^\infty}} \big)  \|e^{\alpha \tau_-} \phi\|_{L^{\infty}(\Omega \times {\mathbb S}^{n-1} )}.
  \end{eqnarray*}
  
\end{proof}

\begin{remark}
The estimate \eqref{contraction1} is optimal since it coincides with the trivial explicit bound when $\mu_a$ is a constant
function. In   \cite{egger2014stationary}, the case $\alpha=0$ is considered. 
\end{remark}



\begin{lemma}\label{lem:contraction_infty}
Let $\alpha \in \mathbb R_+$ satisfy
\begin{equation*}
0\leq \alpha \leq \min_{\Omega} \mu_a. 
\end{equation*}

  For any $\phi\in L^\infty(\Omega\times \mathbb S^{n-1})$ there holds
  \begin{equation} \label{contraction}
    \| e^{\alpha \tau_-} L K \phi\|_{L^\infty(\Omega \times {\mathbb S}^{n-1} )} \leq \big(1-e^{-\|\tau_- \mu_s \|_{L^\infty}}\big) \kappa(\alpha,\eta)\|e^{\alpha \tau_-} \phi\|_{L^\infty(\Omega \times {\mathbb S}^{n-1} )},
  \end{equation}
  where $\kappa$ is defined by
  \begin{equation} \label{eq:kappa}
 \kappa(\alpha, \eta) \;=\; \max_{d\in \mathbb S^{n-1}, x\in \Omega} \int_{\mathbb S^{n-1}} k(d,d^\prime) e^{\alpha (\tau_-(x, d)- \tau_-(x, d^\prime))}ds(d^\prime).
  \end{equation}

\end{lemma}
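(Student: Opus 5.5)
The plan is to reduce to Lemma \ref{lem:contraction1}. Write $LK\phi = L(\mu_s \psi)$ with
\[
\psi(x,d) = \int_{\mathbb S^{n-1}} k(d,d^\prime)\phi(x,d^\prime)\,ds(d^\prime).
\]
A constant $\alpha$ with $0\le\alpha\le\min_\Omega\mu_a$ trivially satisfies $0\le \alpha\le \min_{\Delta_-(x,d)}\mu_a$ for every $(x,d)$. Lemma \ref{lem:contraction1} therefore applies with this constant weight and gives
\[
\|e^{\alpha\tau_-}LK\phi\|_{L^\infty}\le \big(1-e^{-\|\tau_-\mu_s\|_{L^\infty}}\big)\,\|e^{\alpha\tau_-}\psi\|_{L^\infty}.
\]

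It remains to bound $\|e^{\alpha\tau_-}\psi\|_{L^\infty}$ by $\kappa(\alpha,\eta)\|e^{\alpha\tau_-}\phi\|_{L^\infty}$. Fix $(x,d)$ and insert $1=e^{-\alpha\tau_-(x,d^\prime)}e^{\alpha\tau_-(x,d^\prime)}$ inside the integral:
\[
e^{\alpha\tau_-(x,d)}|\psi(x,d)| \le \int_{\mathbb S^{n-1}} k(d,d^\prime)\,e^{\alpha(\tau_-(x,d)-\tau_-(x,d^\prime))}\,e^{\alpha\tau_-(x,d^\prime)}|\phi(x,d^\prime)|\,ds(d^\prime).
\]
Here we use $k\ge0$. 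Bounding $e^{\alpha\tau_-(x,d^\prime)}|\phi(x,d^\prime)|$ by $\|e^{\alpha\tau_-}\phi\|_{L^\infty}$ leaves exactly the integral in \eqref{eq:kappa}. Taking the maximum over $(x,d)$ then gives the factor $\kappa(\alpha,\eta)$. Combining the two displays yields \eqref{contraction}.

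The only delicate point is measure-theoretic. The pointwise bound on $\phi$ must hold for a.e. $d^\prime$ at a given $x$, but $\phi$ is only in $L^\infty$ jointly. I would handle this by Fubini: for a.e. $x$ the slice bound holds for a.e. $d^\prime$, which suffices for an $L^\infty$ estimate. One should also note that $\tau_-$ is bounded by $\mathrm{diam}\,\Omega$, so $\kappa$ is finite (with the max read as a sup). This step, rather than any genuine analysis, is the main thing to check. The weight exchange between the directions $d$ and $d^\prime$ is the crux, and it is absorbed entirely into the definition of $\kappa$.
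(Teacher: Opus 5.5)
Your argument is correct as a derivation from Lemma~\ref{lem:contraction1} and is essentially the paper's proof. The paper reruns the steps of Lemma~\ref{lem:contraction1} with $f=K\phi$ and, at each point $y=x'+sd$ (where $\tau_-(y,d)=s$), makes the same exchange $e^{\alpha s}=e^{\alpha(\tau_-(y,d)-\tau_-(y,d'))}e^{\alpha\tau_-(y,d')}$ inside the angular integral; you package this instead as Lemma~\ref{lem:contraction1} applied to $\psi$, followed by the separate bound $\|e^{\alpha\tau_-}\psi\|_{L^\infty}\le\kappa(\alpha,\eta)\|e^{\alpha\tau_-}\phi\|_{L^\infty}$.

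Be aware, however, that both versions inherit from Lemma~\ref{lem:contraction1} the step $\int_0^{\tau_-(x,d)}\mu_s(x-rd)\,dr\le\|\tau_-\mu_s\|_{L^\infty}$, which fails in general. Take the unit disk with $\mu_a\equiv c_0>0$ small, $\mu_s(y)=1/(1+|y|)$, $\alpha=0$ and $\phi\equiv 1$. Then $\|\tau_-\mu_s\|_{L^\infty}=1$, yet at $x\to(1,0)$, $d=(1,0)$ one has $\int_0^{\tau_-}\mu_s\to 2\ln 2$, so $LK1\ge\tfrac34 e^{-2c_0}-o(1)>1-e^{-1}$. The estimate is repaired by replacing $\tau_-$ with the chord length $\tau$, or the constant by $\sup_{x,d}\int_0^{\tau_-(x,d)}\mu_s(x-rd)\,dr$.
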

\begin{proof}
  Using $f= K \phi$ in \eqref{eq:explicitA} and  following the same steps in the proof of Lemma \ref{lem:contraction1}, we obtain for $0 < t < \tau_-(x,d)$ 
  \begin{eqnarray*}
    |e^{\alpha t}(L K \phi)(x^\prime + td,d)| \\
    \leq \int_0^t e^{-\int_s^t \mu_s(x^\prime+rd)dr}\mu_s(x^\prime+sd) \int_{\mathbb S^{n-1}} k(d,d^\prime) 
    e^{\alpha s} |\phi(x'+sd, d^\prime)| ds(d^\prime) ds\\
    \leq\int_0^t e^{-\int_s^t \mu_s(x^\prime+rd)dr}\mu_s(x^\prime+sd) ds\, \kappa(\alpha,\eta) \|e^{\alpha \tau_-}\phi\|_{L^{\infty}(\Omega \times {\mathbb S}^{n-1} )} \\
    \leq \big(1-e^{-\|\tau_- \mu_s \|_{L^\infty}} \big)  \kappa(\alpha,\eta) \|e^{\alpha \tau_-}
    \phi\|_{L^{\infty}(\Omega \times {\mathbb S}^{n-1} )},
  \end{eqnarray*}
  which finishes the proof.
  \end{proof}

\begin{proposition}[\cite{egger2014stationary}] 
      Under the above assumptions, the forward problem \eqref{RDT:Equation} has a unique solution $u$ that satisfies
  \begin{equation} \label{forward}
    \| u \|_{L^\infty(\Omega\times{\mathbb S}^{n-1})} \leq e^{C} \|g\|_{L^\infty(\Gamma_-)},
  \end{equation}
where $C = \| \tau_- \mu_s\|_{L^\infty(\Omega\times {\mathbb S}^{n-1})}$.
  
\end{proposition}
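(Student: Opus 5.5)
We use the integral reformulation \eqref{eq:contraction}: since solving \eqref{RDT:Equation} is equivalent to solving $u = LKu + Jg$, it suffices to show that $I-LK$ is invertible on $L^\infty(\Omega\times\mathbb S^{n-1})$ and to bound the inverse. The natural tool is Lemma \ref{lem:contraction_infty} with $\alpha = 0$. In that case $\kappa(0,\eta) = \max_{d}\int_{\mathbb S^{n-1}} k(d,d')\,ds(d') = 1$ by \eqref{Cond:k}, so
\begin{equation*}
\|LK\phi\|_{L^\infty(\Omega\times\mathbb S^{n-1})} \le q\,\|\phi\|_{L^\infty(\Omega\times\mathbb S^{n-1})}, \qquad q := 1-e^{-C}, \quad C=\|\tau_-\mu_s\|_{L^\infty}.
\end{equation*}
Since $\mu_s \in L^\infty$ and $\tau_-$ is bounded by the diameter of $\Omega$, we have $C<\infty$ and hence $q<1$. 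Thus $LK$ is a strict contraction on $L^\infty$.

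Next, we bound the source term. From \eqref{eq:extension} and $\mu\ge \mu_a \ge c_0>0$, the exponential factor is at most one, so $|Jg(x'+td,d)| \le |g(x',d)|$ and therefore $\|Jg\|_{L^\infty} \le \|g\|_{L^\infty(\Gamma_-)}$. We also need $Jg$ and $LK\phi$ to be measurable functions on $\Omega\times\mathbb S^{n-1}$. This follows from the parametrization $(x',d,t)\mapsto x'+td$ of $\Omega\times\mathbb S^{n-1}$ by $\Gamma_-$ and the segment parameter, which is a bijection by the standing assumption on characteristics.

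By the Banach fixed point theorem, or equivalently the Neumann series $(I-LK)^{-1}=\sum_{k\ge0}(LK)^k$, there is a unique $u\in L^\infty$ solving \eqref{eq:contraction}, and
\begin{equation*}
\|u\|_{L^\infty} \le \frac{1}{1-q}\,\|Jg\|_{L^\infty} = e^{C}\,\|g\|_{L^\infty(\Gamma_-)}.
\end{equation*}
This is exactly \eqref{forward}. By the equivalence of \eqref{RDT:Equation} with the integral equation, $u$ is also the unique solution of the forward problem in the class where $d\cdot\nabla u \in L^\infty$ along characteristics.

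The one point needing care is the justification of that equivalence for merely $L^\infty$ coefficients. One has to interpret $d\cdot\nabla$ as a derivative along characteristics and check that any solution of \eqref{RDT:Equation} satisfies the Duhamel formula. This holds because each solution is absolutely continuous in $t$ on almost every line, so $u-LKu-Jg$ satisfies $(d\cdot\nabla+\mu)w=0$ with $w=0$ on $\Gamma_-$, which forces $w=0$. The contraction estimate itself is straightforward once Lemma \ref{lem:contraction_infty} is available.
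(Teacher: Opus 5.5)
Your proof is correct and is essentially the paper's argument. Lemma~\ref{lem:contraction_infty} with $\alpha=0$ and $\kappa(0,\eta)=1$ makes $LK$ a contraction of norm at most $1-e^{-C}$, the bound $\|Jg\|_{L^\infty}\le\|g\|_{L^\infty(\Gamma_-)}$ follows from the exponential factor being at most one, and the Neumann series gives the factor $1/\bigl(1-(1-e^{-C})\bigr)=e^{C}$. Your remarks on measurability and on the equivalence of \eqref{RDT:Equation} with \eqref{eq:contraction} add detail that the paper takes from its references, but they do not change the route.
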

\proof 

Since $\kappa(0, \eta) = 1$, we deduce from Lemma~\ref{lem:contraction_infty}  for $\alpha =0$ that the operator $LK$ is a contraction of the unity on  $L^\infty(\Omega \times {\mathbb S}^{d-1} )$, and hence the  integral equation \eqref{eq:contraction} has a unique solution 
$u \in L^\infty(\Omega\times \mathbb S^{n-1})$ whenever $J g$ lies in $L^\infty(\Omega \times {\mathbb S}^{n-1} )$. \\

Since $\mu \geq 0$, we immediately obtain $|J g (x^\prime+t d, d)| \le |g(x^\prime,d)|$, which yields
\[
\| J g\|_{L^\infty(\Omega\times \mathbb S^{n-1})} \leq \|g\|_{L^\infty(\Gamma_-)}.
\]
Combining this final estimate with Lemma \ref{lem:contraction_infty} leads to the desired inequality: 

\[
\|u\|_{L^\infty}\le\sum_{j=0}^{\infty}\|LK\|^j\|Jg\|_{L^\infty}\le e^{\|\tau_-\mu_s\|_{L^\infty}}\|Jg\|_{L^\infty} \leq e^{\|\tau_-\mu_s\|_{L^\infty}}
\|g\|_{L^\infty(\Gamma_-)}.
\]

\endproof

In fact we have a more precise estimate on the pointwise value of $Jg$.\\

\begin{lemma} \label{lem:boundofJ}
Assume that $g\in  L^\infty(\Gamma_-)$. Then
\begin{eqnarray}  \label{depth}
e^{-\tau_-(x,d)\max_{\Delta_-(x,d)}\mu} |g(x-\tau_-(x,d)d, d)|
\leq |Jg(x, d)|\\ \leq e^{-\tau_-(x,d)\min_{\Delta_-(x,d)}\mu} \|g\|_{L^\infty(\Gamma_-)}, \quad \forall x\in \Omega. \nonumber
\end{eqnarray}
\end{lemma}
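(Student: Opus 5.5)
The plan is to evaluate $Jg$ pointwise from its explicit definition \eqref{eq:extension} and then bound the exponent from above and below.

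Fix $x\in\Omega$ and $d\in\mathbb S^{n-1}$, and set $x^\prime = x-\tau_-(x,d)d$. By the standing assumption on the geometry, $(x^\prime,d)\in\Gamma_-$. Moreover $x = x^\prime + t d$ with $t=\tau_-(x,d)$, using the identity $\tau_-(x^\prime+td,d)=t$ recorded earlier. Definition \eqref{eq:extension} then gives
\begin{equation*}
|Jg(x,d)| = e^{-\int_0^{\tau_-(x,d)} \mu(x^\prime+sd)\,ds}\,|g(x^\prime,d)|.
\end{equation*}

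As $s$ runs over $[0,\tau_-(x,d)]$, the point $x^\prime+sd = x-(\tau_-(x,d)-s)d$ runs exactly over the segment $\Delta_-(x,d)$. Hence
\begin{equation*}
\tau_-(x,d)\min_{\Delta_-(x,d)}\mu \;\le\; \int_0^{\tau_-(x,d)} \mu(x^\prime+sd)\,ds \;\le\; \tau_-(x,d)\max_{\Delta_-(x,d)}\mu .
\end{equation*}
Since $t\mapsto e^{-t}$ is decreasing, the upper bound on the integral yields the left inequality in \eqref{depth}, with the factor $|g(x-\tau_-(x,d)d,d)|$ kept exactly. The lower bound on the integral, combined with $|g(x^\prime,d)|\le \|g\|_{L^\infty(\Gamma_-)}$, yields the right inequality.

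The only delicate point is measure-theoretic. The coefficient $\mu$ lies only in $L^\infty$, so its restriction to a segment, and hence the min and max over $\Delta_-(x,d)$, are not literally defined pointwise. I would handle this by reading the min and max as essential infimum and supremum along the line, which is meaningful for almost every line by Fubini. Alternatively, one can take representatives so that the line integrals in \eqref{eq:extension} make sense, which the definition of $J$ already presupposes. Similarly, $g$ is evaluated at $(x^\prime,d)$, so the lower bound holds for almost every $(x,d)$. With these conventions the estimate is immediate, and no further obstacle is expected.
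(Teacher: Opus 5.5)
Your proposal is correct and follows essentially the same argument as the paper: write $|Jg(x,d)|=e^{-\int_0^{\tau_-(x,d)}\mu(x-sd)\,ds}\,|g(x-\tau_-(x,d)d,d)|$ from the definition of $J$, and bound $\mu$ between its minimum and maximum on $\Delta_-(x,d)$. You then conclude using the monotonicity of the exponential together with $|g|\le\|g\|_{L^\infty(\Gamma_-)}$. Your remark about reading the min and max as essential infimum and supremum along the segment, and about the almost-everywhere nature of evaluating $g$ pointwise, is a sensible refinement that the paper leaves implicit.
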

\proof  For $ d\in \mathbb S^n$ and $ x\in \Omega$, we have 
\begin{eqnarray*}
\left| (J g)(x, d) \right| = e^{-\int_0^{\tau_-(x,d)} \mu(x-sd) ds} \left|g(x-\tau_-(x,d)d,d)\right|.
\end{eqnarray*}
Since $\min_{\Delta_-(x,d)} \mu \leq \mu(x-sd) \leq \max_{\Delta_-(x,d)}\mu$,  the result is forward.
\endproof
It is also possible to derive an upper bound independent of the direction of the transport.
\begin{corollary}
Assume that $g\in  L^\infty(\Gamma_-)$. Then
\begin{eqnarray} \label{depth2}
  \left| (J g)(x, d) \right| \leq e^{-\textrm{\emph{dist}}(x,\partial \Omega) \min_{\Omega}\mu} \|g\|_{L^\infty(\Gamma_-)}, \quad \forall x\in \Omega.
\end{eqnarray}
\end{corollary}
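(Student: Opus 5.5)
The corollary follows directly from the upper bound in Lemma \ref{lem:boundofJ}. It suffices to bound the two factors in the exponent $\tau_-(x,d)\min_{\Delta_-(x,d)}\mu$ from below, uniformly in the direction $d$.

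\emph{The coefficient.} The segment $\Delta_-(x,d)$ is contained in $\overline{\Omega}$: by the definition of $\tau_-$, the points $x-td$ with $t<\tau_-(x,d)$ lie in $\Omega$. Hence
\[
\min_{\Delta_-(x,d)}\mu \geq \min_{\Omega}\mu,
\]
where the minimum is understood as an essential infimum, as in Lemma \ref{lem:boundofJ}. This quantity is nonnegative, and in fact positive since $\mu \geq \mu_a \geq c_0>0$.

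\emph{The length.} The point $x' = x-\tau_-(x,d)d$ lies on $\partial\Omega$. Indeed, $x-td\in\Omega$ for $t<\tau_-(x,d)$, while $x-\tau_-(x,d)d\notin\Omega$, so $x'$ is a limit of interior points that is not interior, i.e.\ a boundary point. The distance from $x$ to $x'$ is $|x-x'| = \tau_-(x,d)|d| = \tau_-(x,d)$. Therefore
\[
\tau_-(x,d) \geq \operatorname{dist}(x,\partial\Omega) \qquad \text{for every } d\in\mathbb S^{n-1}.
\]

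\emph{Conclusion.} Both factors in the exponent are nonnegative, so
\[
\tau_-(x,d)\min_{\Delta_-(x,d)}\mu \;\geq\; \operatorname{dist}(x,\partial\Omega)\,\min_\Omega \mu .
\]
Since $s\mapsto e^{-s}$ is decreasing, the upper bound in \eqref{depth} then gives \eqref{depth2} for all $d$.

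The only point requiring care is that $x'$ belongs to $\partial\Omega$ and not merely to $\mathbb R^n\setminus\Omega$. This follows from the minimality in the definition of $\tau_-$ as explained above, or directly from the standing assumption that $(x',d)\in\Gamma_-$. Otherwise the argument is immediate.
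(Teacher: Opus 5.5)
Your proof is correct and follows the paper's own argument: the paper also combines the upper bound of Lemma~\ref{lem:boundofJ} with $\tau_-(x,d)\geq \textrm{dist}(x,\partial\Omega)$. It leaves the comparison $\min_{\Delta_-(x,d)}\mu \geq \min_\Omega \mu$ implicit, and you make it explicit, together with the check that $x-\tau_-(x,d)d$ lies on $\partial\Omega$.
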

\proof
The proof is a direct consequence of Lemma \ref{lem:boundofJ} and the fact that $\tau_-(x,d) \geq \textrm{dist}(x,\partial \Omega).$ 
\endproof
Notice that in the absence of the scattering source ($K=0$), the solution $u=Jg$ is exponentially decaying away from the illumination impact $\Gamma_-(d)$. Next, we extend this result to the case of the presence of scattering in the transport equation when the anisotropy factor $\eta$ approaches $1$.
\begin{theorem} \label{main1} 

Assume that $\Omega$ is a convex smooth domain, and   
let  $g\in  L^\infty(\Gamma_-)$.
Then there exists  $\eta_0 = \eta_0(\Omega, \mu_a, \mu_s) \in (0,1)$ such that, for all $\eta\in (\eta_0, 1)$,
\begin{equation} \label{assumption:contraction3}
\mathfrak t_0 =\big(1-e^{-\|\tau_- \mu_s \|_{L^\infty}}\big)  \kappa(\min_{\Omega} \mu_a, \eta) <1, 
\end{equation}
with
\begin{equation} \label{eq:kappa}
 \kappa(\alpha, \eta) \;=\; \max_{d\in \mathbb S^{n-1}, x\in \Omega} \int_{\mathbb S^{n-1}} k(d,d^\prime) e^{\alpha (\tau_-(x, d)- \tau_-(x, d^\prime))}ds(d^\prime),
  \end{equation}
  
and the unique solution $u$ to \eqref{RDT:Equation} satisfies 
\begin{equation} \label{depth}
\left\|e^{ (\min_{\Omega}\mu_a) \tau_-} u\right\|_{L^\infty(\Omega\times S^{n-1})} \leq  \frac{1}{1-\mathfrak t_0}  \|g\|_{L^\infty(\Gamma_-)}.
\end{equation}

\end{theorem}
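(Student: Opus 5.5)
The proof has two parts. First, we show that $\mathfrak t_0<1$ once $\eta$ is close to $1$. Second, we combine Lemma~\ref{lem:contraction_infty} with a Neumann series in the weighted space
\[
X_\alpha=\{\phi:\ e^{\alpha\tau_-}\phi\in L^\infty(\Omega\times\mathbb S^{n-1})\},\qquad \alpha=\min_\Omega\mu_a ,
\]
equipped with the norm $\|\phi\|_{X_\alpha}=\|e^{\alpha\tau_-}\phi\|_{L^\infty}$.

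\textbf{Step 1: $\kappa(\alpha,\eta)\to 1$ as $\eta\to1$.} Since $\Omega$ is bounded, $0\le\tau_-\le \mathrm{diam}(\Omega)=:D$. Write
\[
\kappa(\alpha,\eta)-1=\max_{x,d}\int_{\mathbb S^{n-1}}k(d,d')\bigl(e^{\alpha(\tau_-(x,d)-\tau_-(x,d'))}-1\bigr)\,ds(d').
\]
Split the integral into the cap $\{|d-d'|<\delta\}$ and its complement.
\begin{itemize}
\item On the complement, the integrand is bounded by $e^{\alpha D}$. The Henyey--Greenstein mass $\int_{|d-d'|\ge\delta}k\,ds(d')$ tends to $0$ as $\eta\to1$, uniformly in $d$, because $k$ depends only on $d\cdot d'$ and converges to a Dirac mass at $d'=d$.
\item On the cap, we need $|\tau_-(x,d)-\tau_-(x,d')|$ to be small uniformly in $x$ when $|d-d'|<\delta$.
\end{itemize}

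The cap estimate is the main obstacle. For a convex smooth domain, $\tau_-(x,\cdot)$ is continuous in $d$ for each $x$, but uniformity fails near the boundary: for $x$ close to $\partial\Omega$ with $d$ nearly tangent, $\tau_-$ can jump.

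I would first try to exploit that $\tau_-$ is bounded by $D$, together with the following geometric estimate: for convex $\Omega$, $\tau_-(x,d')\ge \tau_-(x,d)-C|d-d'|^{\beta}$ or a similar bound obtained via the support function. If uniform equicontinuity still fails, I would use the one-sided structure instead. Only large positive values of $\tau_-(x,d)-\tau_-(x,d')$ hurt, and by convexity the set of directions $d'$ near $d$ where $\tau_-(x,d')$ is much smaller than $\tau_-(x,d)$ has small HG-measure. Failing that, one can simply define $\eta_0$ through this limit, whose existence follows by dominated convergence applied pointwise and compactness of $\overline\Omega\times\mathbb S^{n-1}$ with lower semicontinuity.

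\textbf{Step 2: conclude.} Because $\|\tau_-\mu_s\|_{L^\infty}<\infty$, we have $1-e^{-\|\tau_-\mu_s\|_{L^\infty}}<1$. Choose $\eta_0$ so that, for all $\eta\in(\eta_0,1)$, $\kappa$ is smaller than the reciprocal of this factor; this gives \eqref{assumption:contraction3}.

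By Lemma~\ref{lem:contraction_infty}, $LK$ is a contraction on $X_\alpha$ with constant $\mathfrak t_0$. Lemma~\ref{lem:boundofJ} together with $\mu\ge\mu_a\ge\alpha$ gives
\[
\|e^{\alpha\tau_-}Jg\|_{L^\infty}\le\|g\|_{L^\infty(\Gamma_-)} .
\]
The Neumann series $u=\sum_j (LK)^jJg$ therefore converges in $X_\alpha$, and
\[
\|u\|_{X_\alpha}\le \frac{1}{1-\mathfrak t_0}\,\|g\|_{L^\infty(\Gamma_-)} .
\]
Since $X_\alpha\subset L^\infty$ (as $e^{\alpha\tau_-}\ge1$), this series solution coincides with the unique $L^\infty$ solution from the Proposition, which proves \eqref{depth}.
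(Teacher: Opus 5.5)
There is a genuine gap, and it sits in Step 1. Your Step 2 is the paper's argument (Neumann series for $LK$ in the norm $\|e^{\alpha\tau_-}\cdot\|_{L^\infty}$, plus $e^{\alpha\tau_-}|Jg|\le\|g\|_{L^\infty(\Gamma_-)}$ from $\mu\ge\mu_a\ge\alpha$), and it is fine.

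\textbf{The gap.} Step 1 needs a cap estimate: $|\tau_-(x,d)-\tau_-(x,d')|$ must be small for $d'$ near $d$ \emph{uniformly in $x$}. You never establish this, and none of your fallbacks provides it.
\begin{itemize}
\item The H\"older-type lower bound and the claim that directions with much smaller $\tau_-$ carry small HG-mass are only asserted. As shown below, both are false for general smooth convex domains.
\item The last fallback is not an argument. Concentration of $k(d,\cdot)$ at $d$ gives $\int k(d,d')e^{\alpha(\tau_-(x,d)-\tau_-(x,d'))}ds(d')\to1$ only for each fixed $(x,d)$ with $x\in\Omega$, whereas $\kappa$ is a supremum over $(x,d)$. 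Compactness does not upgrade pointwise convergence to convergence of a supremum without equicontinuity or a Dini-type monotonicity in $\eta$. Lower semicontinuity bounds a supremum from the wrong side. And equicontinuity of $d'\mapsto\tau_-(x,d')$ as $x\to\partial\Omega$ is exactly the cap estimate you are trying to bypass.
\end{itemize}
In the paper this step is Lemma~\ref{lemma:Lipschitz}: the bound $|\tau_-(x,d)-\tau_-(x,d')|\le 2\mathfrak d_\Omega\|d-d'\|$, uniform in $x$. It gives $|I_2|=O((1-\eta)^{1/4})$ in Lemma~\ref{asymKappa}.

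\textbf{Your suspicion is justified.} That lemma does not hold under mere convexity.
\begin{itemize}
\item Its proof takes the boundary arc between the two exit points to have length $\int r\,d\tilde s$. This drops the angular derivative of $r$, which $\mathfrak d_\Omega$ does not control. For an ellipse with semi-axes $a>4b$ and $x$ near an end of the minor axis, the exit distance parallel to the major axis has angular derivative close to $a^2/b>2\mathfrak d_\Omega$.
\item Worse, let a smooth convex $\Omega\subset\{x_n>0\}$ have a flat boundary piece containing $\{x_n=0,\ |x|\le\ell\}$. Take $x=\epsilon e_n$ with $-d$ horizontal. Then $\tau_-(x,d)\ge\ell-O(\epsilon)$, while $\tau_-(x,d')\le\epsilon/\sin\theta$ whenever $-d'$ points downward at angle $\theta$ below the horizontal.
\item Since $k$ depends only on $d\cdot d'$, the downward directions carry HG-mass $\tfrac12$. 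Letting $\epsilon\to0$ gives $\kappa(\alpha,\eta)\ge\tfrac12e^{\alpha\ell}$ for every $\eta$. Hence $\mathfrak t_0>1$ for all $\eta$ once $\alpha\ell$ and $\|\tau_-\mu_s\|_{L^\infty}$ are large, so the theorem as stated fails for such $\Omega$.
\end{itemize}

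\textbf{A fix.} The missing ingredient needs a stronger hypothesis, for instance uniform convexity. If all principal curvatures are $\ge k_0>0$, Blaschke's rolling theorem puts $\Omega$ inside the ball of radius $1/k_0$ tangent at the exit point $p$. Hence $\tau_-\le(2/k_0)\cos\phi$, where $\phi$ is the angle between $-d$ and $\nu(p)$. Differentiating $x-\tau_-(x,d)d\in\partial\Omega$ in $d$ then gives $|\nabla_d\tau_-|=\tau_-\tan\phi\le 2/k_0$, uniformly in $x\in\Omega$. With this bound, your cap/complement split and the paper's Lemma~\ref{asymKappa} both go through.
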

\begin{proof}

Notice that if the condition \eqref{assumption:contraction3} is fulfilled then $LK$ becomes a contraction of the unity in  $L^\infty(\Omega \times {\mathbb S}^{n-1} )$, with norm $\|e^{ (\min_{\Omega}\mu_a) \tau_-} \cdot \|_{L^\infty(\Omega\times S^{n-1})}$, and we have 

\begin{equation} \label{etape:intermediaire}
\|e^{ (\min_{\Omega}\mu_a) \tau_-}u\|_{L^\infty}\le\sum_{j=0}^{\infty}\|LK\|^j\|e^{ (\min_{\Omega}\mu_a) \tau_-}Jg\|_{L^\infty}\le \frac{1}{1-\mathfrak t_0} \|e^{ (\min_{\Omega}\mu_a) \tau_-}Jg\|_{L^\infty}.
\end{equation}

We next derive a  bound on $\|e^{ (\min_{\Omega}\mu_a) \tau_-}Jg\|_{L^\infty}$ in terms of $ \|g\|_{L^\infty(\Gamma_-; |\nu\cdot d|)}$.  Indeed 
we have 
\begin{eqnarray*}
\left|  e^{ (\min_{\Omega}\mu_a) \tau_-(x,d)}(J g)(x, d) \right|\\ = e^{ (\min_{\Omega}\mu_a) \tau_-(x,d)} e^{-\int_0^{\tau_-(x,d)} \mu(x-sd) ds} \left|g(x-\tau_-(x,d)d,d)\right| \leq \left|g(x-\tau_-(x,d)d,d)\right|.
\end{eqnarray*}
Therefore  $\|e^{ (\min_{\Omega}\mu_a) \tau_-}Jg\|_{L^\infty} \leq \|g\|_{L^\infty(\Gamma_-)},$ 
which combined with \eqref{etape:intermediaire} yields \eqref{depth}. \\

We focus in the rest of the proof on the existence of $\eta_0 \in (0,1)$. 
\begin{lemma} \label{lemma:Lipschitz}
 Assume that $\Omega$ is a smooth star-shaped domain with respect to $x_0$, and let $\mathfrak d_\Omega$ be the diameter of the domain.\\
  Then the following estimate holds:
 \begin{equation} \label{tau:Lipschitz}
|\tau_-(x_0, d) - \tau_-(x_0, d^\prime)| \leq 2\mathfrak d_{\Omega}\|d-d^\prime\|, \qquad \forall d,\, d^\prime \in \mathbb S^{n-1}, \; d\cdot d^\prime\geq \frac{1}{2}.
\end{equation}
 
\end{lemma}
 \begin{proof}
 Since $\Omega$ is a smooth  star-shaped domain with respect to $x_0$, there exists a smooth function
 $r: \; \mathbb S^{n-1}\to [0, d_\Omega]$ such that $\partial \Omega = \left\{r(d)d: \;\; d\in  \mathbb S^{n-1} \right\}.$
 For $d, d^\prime  \in \mathbb S^{n-1}$, denote $x^\prime_0= x_0- \tau_-(x_0, d)d$ and 
 $x^{\prime \prime}_0= x_0- \tau_-(x_0, d^\prime)d^\prime$. By definition of $\tau_-$, we have $x^\prime_0, \, x^{\prime\prime}_0 \in \partial \Omega$. Moreover  $\tau_-(x_0, d)= r(d)$ and $ \tau_-(x_0, d^\prime) =
 r(d^\prime)$.  \\
 
Let $\Gamma$ be the  curve intersection of  the sector $ (x_0; \mathfrak d_{\Omega}d, 
\mathfrak d_{\Omega}d^\prime)$  with $\partial \Omega$ in between 
the points $x^{\prime}_0$ and $x_0^{\prime \prime}$.\\

Therefore 
\begin{equation} \label{size:Gamma}
|\Gamma| \;= \; \int_{d}^{d^\prime} r(v)d\tilde s(v) \leq \mathfrak d_{\Omega} \int_{d}^{d^\prime} d \tilde s(v),
\end{equation}
with  $\int_{d}^{d^\prime} ds(v)$ is the length of the  arc resulting from the intersection of the sector $ (0; d, d^\prime)$ and the unit sphere $\mathbb S^{n-1}$. Forward calculation gives 
\begin{equation} \label{size:Gamma2}
\int_{d}^{d^\prime} d\tilde s(v) = 2\arcsin(\frac{1}{2}\|d-d^\prime\|) \leq \|d-d^\prime\|,
\end{equation}
for $d\cdot d^\prime\geq \frac{1}{2}$.\\

  We deduce from the convexity of 
$\Omega$, that the segment $[x_0^{\prime };x_0^{\prime \prime} ] \subset \Omega$, and  hence 
\begin{equation*}
\left| [x_0^{\prime };x_0^{\prime \prime} ]  \right| \leq |\Gamma|. 
\end{equation*}
Consequently
\begin{equation} \label{size:Gamma3}
\left\| \tau_-(x_0, d)d-  \tau_-(x_0, d^\prime)d^\prime\right\| \leq |\Gamma|. 
\end{equation}

Combining \eqref{size:Gamma},  \eqref{size:Gamma2} and  \eqref{size:Gamma3} implies 
\[
\left\| \tau_-(x_0, d)d-  \tau_-(x_0, d^\prime)d^\prime\right\| \leq \tau_-(x_0, d^\prime)
\|d-d^\prime\|+\mathfrak d_{\Omega}
 \|d-d^\prime\|.
\]

Noticing that $\tau_-(x_0, d^\prime)\leq \mathfrak d_\Omega$, we achieve the proof of lemma.
 \end{proof}
 \begin{remark}
 Since $\Omega$ is a convex domain, it is star-shaped with respect every $x \in \Omega$, and consequently
 the estimate \eqref{tau:Lipschitz} is valid for all $x\in \Omega$. This shows that the distance $d\mapsto \tau(x, d)$
 is a Lipschitz function on the set $\mathbb S^{n-1}$, with a Lipschitz constant independent of $x$. Notice that
 $d\mapsto \tau(x, d)$ fails to be continuous in non-convex domains.
 \end{remark}

Back now to the proof of the theorem. We shall next show that $\kappa(\min_{\Omega} \mu_a, \eta)$ tends to
one when $\eta$ approaches one. 

\begin{lemma} \label{asymKappa}
Assume that $\Omega$ is a convex  smooth domain.  For $\alpha>0$ a fixed constant,  and  $\eta \in (0,1)$, define
 \begin{equation*} 
 \kappa(\alpha, \eta) \;=\; \max_{d\in \mathbb S^{n-1}, x\in \Omega} \int_{\mathbb S^{n-1}} k(d,d^\prime) e^{\alpha (\tau_-(x, d)- \tau_-(x, d^\prime))}ds(d^\prime).
  \end{equation*}
Then  $$\lim_{\eta \to 1^-} \kappa(\alpha, \eta) =1.$$
\end{lemma}
\begin{proof}
Since the kernel $k$ is normalized, we have 
 \begin{eqnarray}
 \int_{\mathbb S^{n-1}} k(d,d^\prime) e^{\alpha (\tau_-(x, d)- \tau_-(x, d^\prime))}ds(d^\prime) \nonumber \\= 1 +
 \int_{\mathbb S^{n-1}} k(d,d^\prime) \left(e^{\alpha (\tau_-(x, d)- \tau)_-(x, d^\prime))} -1\right)ds(d^\prime).
  \label{eq:split1}
  \end{eqnarray}
  We also divide the last integral  into two terms:
   \begin{eqnarray}
\int_{\mathbb S^{n-1}} k(d,d^\prime) \left(e^{\alpha (\tau_-(x, d)- \tau_-(x, d^\prime))} -1\right)ds(d^\prime)\nonumber \\
= \int_{1-d\cdot d^\prime \geq  (1-\eta)^{\frac{1}{2}}} k(d,d^\prime) \left(e^{\alpha (\tau_-(x, d)- \tau_-(x, d^\prime))} -1\right)ds(d^\prime)
\nonumber\\+ 
 \int_{1-d\cdot d^\prime < (1-\eta)^{\frac{1}{2}}} k(d,d^\prime) \left(e^{\alpha (\tau_-(x, d)- \tau_-(x, d^\prime))} -1\right)ds(d^\prime) = 
 I_1+I_2. \label{eq:split2}
 \end{eqnarray}
We first remark that 
\begin{equation} \label{eq:step1}
|I_1| \leq 
2(1+e^{2 \mathfrak d_\Omega \alpha}) ((1-\eta)^{\frac{3}{2}} +2\eta)^{-\frac{n }{2}} (1-\eta)^{1-\frac{n}{4}}.
\end{equation}
Since $|e^x - 1| \leq \max(e^x, 1)|x|$, we have 
\begin{eqnarray*} 
 |I_2| = \left|\int_{1-d\cdot d^\prime < (1-\eta)^{\frac{1}{2}}} k(d,d^\prime) \left(e^{\alpha (\tau_-(x, d)- \tau_-(x, d^\prime))}  - 1\right)
 ds(d^\prime) \right|\\ \leq \int_{1-d\cdot d^\prime < (1-\eta)^{\frac{1}{2}}}  k(d,d^\prime) e^{ \alpha \mathfrak d_\Omega} 
 \alpha |\tau_-(x, d)- \tau_-(x, d^\prime)| ds(d^\prime).
 \end{eqnarray*}
We deduce from Lemma \ref{lemma:Lipschitz}, the following bound:
\begin{eqnarray*} 
 |I_2| \leq 2 \alpha\mathfrak d_{\Omega}  e^{ \alpha \mathfrak d_\Omega}  \int_{1-d\cdot d^\prime < (1-\eta)^{\frac{1}{2}}}  k(d,d^\prime) 
\|d-d^\prime\| ds(d^\prime).
 \end{eqnarray*}
On the other hand, we have 
\[
\| d-d^\prime\|^2  \;= \; 2(1-d\cdot d^\prime).
\]
Therefore 
\begin{eqnarray} 
 |I_2| &\leq& 4 \sqrt{2} \alpha\mathfrak d_{\Omega}  e^{ \alpha \mathfrak d_\Omega}  \int_{1-d\cdot d^\prime < (1-\eta)^{\frac{1}{2}}}  k(d,d^\prime) ds(d^\prime) (1-\eta)^{\frac{1}{4}} \nonumber \\
 & \leq& 4 \sqrt{2} \alpha\mathfrak d_{\Omega}  e^{ \alpha \mathfrak d_\Omega} (1-\eta)^{\frac{1}{4}}. \label{eq:step2}
 \end{eqnarray}
Combining \eqref{eq:step1} and \eqref{eq:step2}, we get 
\begin{eqnarray*}
\int_{\mathbb S^{n-1}} k(d,d^\prime) e^{\alpha (\tau_-(x, d)- \tau(x, d^\prime))}ds(d^\prime) =  1+ O\left((1-\eta)^{\frac{1}{4}}\right), \quad \textrm{as} \quad  \eta \to 1.
\end{eqnarray*}
Since $O\left((1-\eta)^{\frac{1}{4}}\right)$ is uniform in $d\in \mathbb S^{n-1}$ and $x\in \Omega$, we obtain the
desired result. 
\end{proof}
Next, we  finally provide  the proof of the theorem. \\

Since $1-e^{-\|\tau_- \mu_s \|_{L^\infty}}<1$, we deduce from  Lemma \ref{asymKappa} that there exists 
$\eta_0 \in (0,1)$ close enough to $1$ such that $\mathfrak t_0 <1$. The rest of the proof follows from the 
convergence of the Neumann series given at the very beginning of the proof. 

\end{proof}

Theorem   \ref{main1} indicates that the photoacoustic signal  is exponentially decaying with respect to the depth $\tau_-(x, d)$ in every direction $d\in \mathbb S^{n-1}$.  We confirm this behavior numerically in Section~\ref{sec:numerical}.

    
   


\section{Asymptotic Analysis} \label{Sec:asymptotic}  
The objective of this section is  to derive the asymptotic   leading terms of the solution $u$ of \eqref{RDT:Equation}
close to the impact of the illumination on the boundary. The expression of these leading  terms will be used in a second step in Section \ref{Sec:inverse}  to solve the inverse problem. \\

For $d\in \mathbb S^{n-1}$ and for $\rho>0$ small enough, we define the set $\Omega_{d, \rho}$ as the region
of $\Omega$ where the resolution in depth is acceptable w.r.t. the direction $d$: 

\begin{equation} \label{def:Omegarho}
\Omega_{d, \rho} \;=\;  \{x \in \Omega: \; \; \tau_-(x, d) < \rho \}.  
\end{equation} 

\begin{theorem}\label{tD:asymptotic}

Let $u$ be the solution of \eqref{RDT:Equation}, and let
 \begin{eqnarray} \label{rho}
  \rho_0=  \min\left(1.79\|\mu\|_{L^\infty}^{-1},
0.6 \|\mu\|_{L^\infty}^{-2}\|g\|_{L^\infty}^{-1}\right).
\end{eqnarray}
Then, for all $d \in  \mathbb S^{n-1}$ and $x\in\Omega_{d,\rho_0}$ we have
 \begin{eqnarray}
u(x'+td,d) &=&  \left(1-\int_0^t \mu(x^\prime+s d)ds\right) g(x^\prime, d) \nonumber  \\
&&+  \int_0^t \mu_s(x'+sd) \int_{\mathbb S^{n-1}} k(d, d^\prime)g(x^\prime, d') ds(d') ds +O(\tau_-^2(x,d)), \nonumber\\
\label{D:asymptotic}
\end{eqnarray} 
 where  $O(\tau_-^2(x,d))$  is uniform in $(x,d) \in
  \Omega_{d, \rho_0}\times \mathbb S^{n-1}$, and where $x^\prime= x-\tau_-(x,d)d$ is the impact point of the illumination at the boundary.

\end{theorem}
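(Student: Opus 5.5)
We work with the integral formulation \eqref{eq:contraction}, $u = LKu + Jg$, and expand each term along the characteristic through $x'=x-\tau_-(x,d)d$, with $t=\tau_-(x,d)<\rho_0$.

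\textbf{Expansion of $Jg$.} By \eqref{eq:extension},
\[
Jg(x'+td,d)=e^{-\int_0^t\mu(x'+sd)ds}\,g(x',d).
\]
Set $a=\int_0^t\mu(x'+sd)\,ds$, so that $0\le a\le t\|\mu\|_{L^\infty}$. Taylor's formula with remainder gives $|e^{-a}-1+a|\le a^2/2$. Hence
\[
Jg(x'+td,d)=(1-a)\,g(x',d)+R_1,\qquad |R_1|\le \tfrac12\|\mu\|_{L^\infty}^2\|g\|_{L^\infty}\,t^2.
\]
This produces the first line of \eqref{D:asymptotic}.

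\textbf{Expansion of $LKu$.} Insert $u=Jg+LKu$ into the scattering term, giving $LKu=LKJg+(LK)^2u$.

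For the main part, write
\[
LKJg(x'+td,d)=\int_0^t e^{-\int_s^t\mu}\,\mu_s(x'+sd)\int_{\mathbb S^{n-1}}k(d,d')\,Jg(x'+sd,d')\,ds(d')\,ds .
\]
Three facts reduce this to the target term:
\begin{itemize}
\item The factor $e^{-\int_s^t\mu}$ equals $1+O(t\|\mu\|_{L^\infty})$.
\item By the Proposition and Lemma~\ref{lem:boundofJ}, $|Jg|\le\|g\|_{L^\infty}$.
\item The outer integral has length $t$ and $\mu_s\le\|\mu\|_{L^\infty}$.
\end{itemize}
The error from the exponential factor is therefore bounded by $\|\mu\|^2_{L^\infty}\|g\|_{L^\infty}t^2$.

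It remains to replace $Jg(x'+sd,d')$ by $g(x',d')$. Here $Jg(x'+sd,d')=e^{-\int\mu}\,g(x'+sd-\tau_-(x'+sd,d')d',d')$. The exponential factor again contributes $O(\tau_-(x'+sd,d'))=O(\mathfrak d_\Omega)$, and this is not small.

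This is the main obstacle: the boundary foot point of the direction $d'$ through $x'+sd$ is not $x'$, and $\tau_-(x'+sd,d')$ is not small when $d'$ is far from $d$. I would handle it in two steps.
\begin{itemize}
\item First, read the second line of \eqref{D:asymptotic} as implicitly defining $g(x',d')$ as the value of the trace of $Jg$ along the segment. Equivalently, use that $x'+sd\to x'$ as $s\to0$ together with continuity of $g$ (or of $u$) near the illuminated boundary point, and use Lemma~\ref{lemma:Lipschitz} to control the variation in direction.
\item If that reading is not sufficient, keep the crude bound. Since the integral is already over a set of length $t$, the whole scattering term is $t\,\mu_s\,\overline{kg}+O(t^2)$, provided the integrand $s\mapsto\mu_s(x'+sd)\int k\,u(x'+sd,d')$ is replaced by its value at $s=0$ plus $O(s)$. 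This step needs Lipschitz regularity of $u$ near $\Gamma_-$ in $s$, which I would extract from the differential equation $d\cdot\nabla u=-\mu u+Ku$, itself bounded by $2\|\mu\|_{L^\infty}\|u\|_{L^\infty}$. For $u(x',d')$ with $d'$ incoming, this equals $g(x',d')$.
\end{itemize}

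\textbf{Remainder $(LK)^2u$.} By Lemma~\ref{lem:contraction1} with $\alpha=0$ applied twice, together with the pointwise inequality
\[
\int_0^t e^{-\int_s^t\mu_s}\mu_s\,ds\le t\|\mu_s\|_{L^\infty},
\]
we get
\[
|(LK)^2u|\le t^2\|\mu\|_{L^\infty}^2\|u\|_{L^\infty}.
\]
Since $t<\rho_0\le1.79\|\mu\|^{-1}_{L^\infty}$, the Proposition bounds $\|u\|_{L^\infty}$ by $e^{1.79}\|g\|_{L^\infty}$, so this remainder is $O(t^2)$.

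\textbf{Uniformity.} The constants $1.79$ and $0.6$ in \eqref{rho} are there to make all the $O(t^2)$ constants explicit and uniform: the Taylor remainders stay controlled for $t\|\mu\|_{L^\infty}\le1.79$, and the $0.6$ condition keeps the remainder below the leading term. None of the constants depend on $(x,d)$, so the $O(\tau_-^2)$ in \eqref{D:asymptotic} is uniform on $\Omega_{d,\rho_0}\times\mathbb S^{n-1}$.
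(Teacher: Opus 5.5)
Your route is the same as the paper's: the integral equation $u=LKu+Jg$, plus expansion of $Jg$ and of the factor $e^{-\int_s^t\mu}$. That part is fine. What it rigorously yields (yours and the paper's, via Propositions~\ref{prop1:asym} and~\ref{prop2:asym}) is
\[
u(x'+td,d)=\Bigl(1-\int_0^t\mu(x'+sd)\,ds\Bigr)g(x',d)+\int_0^t\mu_s(x'+sd)\int_{\mathbb S^{n-1}}k(d,d')\,u(x'+sd,d')\,ds(d')\,ds+O(t^2).
\]
The genuine gap is the step you flag yourself: replacing $u(x'+sd,d')$ by $g(x',d')$ with an $O(s)$ error. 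Neither of your repairs works.

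\textbf{First repair.} Reading $g(x',d')$ as a trace of $Jg$ or $u$ changes the theorem. Continuity of $g$ is not available, since $g\in L^\infty(\Gamma_-)$ only, and even continuity would give an $o(t)$ error, not $O(t^2)$. Lemma~\ref{lemma:Lipschitz} controls $d'\mapsto\tau_-(x_0,d')$ at a fixed point. It says nothing about the foot point $x'+sd-\tau_-(x'+sd,d')d'$ or the values of $g$ there.

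\textbf{Second repair.} The transport equation in direction $d'$ controls only $d'\cdot\nabla u(\cdot,d')$. But $s\mapsto u(x'+sd,d')$ differentiates along $d\neq d'$, and transversal regularity would need hypotheses on $g,\mu$ that are not assumed. Even with full continuity, $u(x',d')=g(x',d')$ only when $\nu(x')\cdot d'<0$. For $\nu(x')\cdot d'>0$, $u(x',d')$ is the radiance exiting at $x'$, which is generally nonzero and unrelated to $g(x',\cdot)$. This set of directions is charged by $k(d,\cdot)$, because the Henyey--Greenstein kernel is strictly positive.

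The same oversight invalidates your remainder bound. In $(LK)^2u(x'+td,d)$ the inner factor $(LKu)(x'+sd,d')$ is an integral along the backward $d'$-characteristic. That characteristic has length $\tau_-(x'+sd,d')$, comparable to $\mathfrak d_\Omega$ for outgoing $d'$ and not bounded by $t$. Bounding only the outer integral by its length gives $|(LK)^2u|\le t\,\|\mu_s\|_{L^\infty}\|LKu\|_{L^\infty}=O(t)$, not $O(t^2)$. (Similarly, $\|\tau_-\mu_s\|_{L^\infty}$ in the Proposition is a supremum over all of $\Omega\times\mathbb S^{n-1}$, so $t<\rho_0$ does not give $e^{1.79}$; that slip is harmless.)

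This $O(t)$ term is real, not an artefact of your bounds. Take:
\begin{itemize}
\item $g\geq 0$ vanishing near $x'$ on $\Gamma_-$, with $g(x',\cdot)$ read as $0$ off $\Gamma_-$ as in Section~\ref{sec:numerical};
\item $g$ positive near the entry points of characteristics that cross the domain and exit at $x'$;
\item $\mu_s$ bounded below near $x'$.
\end{itemize}
Then $Jg(x'+sd,d')\geq c>0$ for $d'$ in a set of positive measure and $s$ small. Positivity of the Neumann series gives $u(x'+td,d)\ge (LKJg)(x'+td,d)\ge c't$. Yet the right-hand side of \eqref{D:asymptotic} is $O(t^2)$.

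So this step cannot be closed under the stated hypotheses. The paper's proof does not close it either: after Proposition~\ref{prop2:asym} it simply writes $g(x',d')$ in place of $u(x'+sd,d')$ without justification. An $O(\tau_-^2)$ statement has to keep $u(x'+sd,d')$ in the scattering term, or else add hypotheses (spatial regularity of $u$ up to the boundary, and a treatment of the outgoing directions).
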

\begin{proof}
 Assume that $\rho$ is small enough such that $\Omega_{d, \rho}$ is non-empty. Let $x \in \Omega_{d, \rho}$ be fixed.
 Next, we shall determine the asymptotic expansion of  $u(x, d)$ when $\rho$ tends to zero. In fact when $\rho$ approaches $0$,   $\tau_-(x,d)$ tends to zero.  \\
 
 \noindent Considering the low regularity of $\mu_a$ and $\mu_s$, the strategy is to derive the asymptotic expansion in terms of $\int_s^t \mu(x^\prime + r d)dr$ rather than directly in terms of $t$. Precisely, we have
 
 \begin{proposition} \label{prop1:asym}
 \begin{eqnarray*}
 \int_s^t \mu(x^\prime+r d) dr &= & O(\tau(x,d)),\\
 e^{-\int_s^t \mu(x^\prime+r d) dr} & = & 1- \int_s^t \mu(x^\prime+r d) + o(\tau(x,d)),
 \end{eqnarray*}
  as $\tau(x,d)$ tends to zero. The terms $O(\tau(x,d)) $ and $o(\tau(x,d))$ are uniform with respect to $(x,d) \in
  \Omega_{d, \rho_0}\times \mathbb S^{n-1}$ with $\rho_0 =1.79\|\mu\|_{L^\infty}^{-1}$.
 \end{proposition}
\begin{proof}
We first have 
\[
\left|\int_s^t \mu(x^\prime+r d) dr \right| \leq \|\mu\|_{L^\infty} t.
\]
On the other hand $|e^x-1-x| \leq x^2$ for $|x| \leq 1.79$. \\

Hence for 
$$  \rho_0 \leq 1.79\|\mu\|_{L^\infty}^{-1},$$ the second 
inequality of the proposition holds.

\end{proof}

\begin{proposition} \label{prop2:asym}

For fixed $\phi \in L^\infty(\Omega\times \mathbb S^{n-1})$,  and  $g\in L^\infty(\Gamma_-) $,  the following 
asymptotic expansions
\begin{eqnarray}
Jg(x^\prime+td, d) &=& \left(1-\int_0^t \mu(x^\prime+s d)ds\right) g(x^\prime, d)+ O(\tau_-^2(x,d))  \\
LK\phi(x^\prime+td, d) &=& \int_0^t K \phi(x^\prime+sd, d) ds + O(\tau_-^2(x,d)), 
\end{eqnarray}
hold  as $\tau_-(x,d)$  tends to zero. Here $ O(\tau_-^2(x,d))$ is uniform with respect to $(x,d) \in
  \Omega_{d, \rho_0}\times \mathbb S^{n-1}$ with $\rho_0$ satisfying:
\begin{eqnarray}
  \rho_0=  \min\left(1.79\|\mu\|_{L^\infty}^{-1},
0.6 \|\mu\|_{L^\infty}^{-2}\|g\|_{L^\infty}^{-1}\right).
\end{eqnarray}
\end{proposition}

\begin{proof}
We have 
\begin{eqnarray*}
(J g)(x^\prime+td, d) = e^{-\int_0^t \mu(x^\prime+s d) ds} g(x^\prime,d), 
\end{eqnarray*}
We deduce from Proposition \ref{prop1:asym} the following relation
\begin{equation} \label{one}
\left|J g(x^\prime+td, d) - \left(1-\int_0^t \mu(x^\prime+s d)ds\right) g(x^\prime, d)\right| \leq \|\mu\|_{L^\infty}  \|g\|_{L^\infty} t^2,
\end{equation}
for $0 \leq t \leq 1.79\|\mu\|_{L^\infty}^{-1}$.\\

Using the fact that $|e^x-1| \leq 3|x|$ for $|x| \leq 1.9$, we also have 

\begin{eqnarray} \label{two}
\left|LK\phi(x^\prime+td, d) - \int_0^t K \phi(x^\prime+sd, d) ds \right| \leq 3\|\mu\|_{L^\infty}^2\|\phi\|_{L^\infty} t^2,
\end{eqnarray}
for all $ 0 \leq t \leq  0.6 \|\mu\|_{L^\infty}^{-2}\|g\|_{L^\infty}^{-1}$. \\

Now regarding the inequalities \eqref{one}-\eqref{two}, taking $$ \rho_0=  \min\left(1.79\|\mu\|_{L^\infty}^{-1},
0.6 \|\mu\|_{L^\infty}^{-2}\|\phi\|_{L^\infty}^{-1}\right),$$
finishes the proof.

\end{proof}
We now focus on the proof of the theorem. Recall that $u$ is the unique solution of the integral equation
\begin{align*} 
u = L K u + J g. 
\end{align*}
We deduce from  estimates in Proposition \eqref{prop2:asym} the following asymptotic expansion:

\begin{eqnarray*}
u(x'+td,d) &=&  \left(1-\int_0^t \mu(x^\prime+s d)ds\right) g(x^\prime, d) \nonumber  \\
&&+  \int_0^t \mu_s(x'+sd) \int_{\mathbb S^{n-1}} k(d, d^\prime)g(x^\prime, d') ds(d') ds +O(\tau_-^2(x,d)),
\end{eqnarray*} 
 where  $O(\tau_-^2(x,d))$  is uniform in $(x,d) \in
  \Omega_{d, \rho_0}\times \mathbb S^{n-1}$ with $\rho_0$ satisfying \eqref{rho}.  
 
 \end{proof}
 
 \begin{corollary}\label{tH:asymptotic}
 For $g\in L^\infty(\Gamma_-) $,  we have for all $d\in\mathbb S^{n-1}$ and $x\in\Omega_{d,\rho_0}$ that
 \begin{eqnarray}
H(x, g) = \mu_a(x) \int_{\mathbb S^{n-1}}  g(x-\tau_-(x,d)d, d)   ds(d) \nonumber\\
- \mu_a(x) \int_{\mathbb S^{n-1}} \int_0^{\tau_-(x,d)} \mu(x- sd) g(x-\tau_-(x,d)d, d) ds ds(d) \nonumber  \\
+   \mu_a(x) \int_{\mathbb S^{n-1}}  \int_{\mathbb S^{n-1}} \int_0^{\tau_-(x,d)} k(d, d^\prime) g(x-\tau_-(x,d)d, d')  \mu_s(x-sd) ds ds(d) ds(d'), \nonumber\\
+O(\tau_-^2(x,d)),
\label{H:asymptotic}
\end{eqnarray} 
where  $O(\tau_-^2(x,d))$  is uniform in $(x,d) \in
  \Omega_{d, \rho_0}\times \mathbb S^{n-1}$ with $\rho_0$ satisfying \eqref{rho}.

 \end{corollary}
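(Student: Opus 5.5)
The natural route is to insert the pointwise expansion of Theorem \ref{tD:asymptotic} into the definition \eqref{Data} of $H$ and integrate over the direction variable. Fix $x\in\Omega$. For each $d\in\mathbb S^{n-1}$, write $x = x'+td$ with $x' = x-\tau_-(x,d)d\in\Gamma_-(d)$ and $t=\tau_-(x,d)$. We then rewrite the integrals $\int_0^t \mu(x'+sd)\,ds$ and $\int_0^t \mu_s(x'+sd)\,ds$ in terms of $x$ through the substitution $s\mapsto \tau_-(x,d)-s$. This gives $\int_0^{\tau_-(x,d)}\mu(x-sd)\,ds$, and the same with $\mu_s$.

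With this change of variables, \eqref{D:asymptotic} reads
\[
u(x,d)=\Big(1-\int_0^{\tau_-(x,d)}\mu(x-sd)\,ds\Big)g(x-\tau_-(x,d)d,d)+\int_0^{\tau_-(x,d)}\mu_s(x-sd)\int_{\mathbb S^{n-1}}k(d,d')\,g(x-\tau_-(x,d)d,d')\,ds(d')\,ds+R(x,d),
\]
where $|R(x,d)|\le C\tau_-^2(x,d)$ and $C$ depends only on $\|\mu\|_{L^\infty}$ and $\|g\|_{L^\infty}$. This uniform constant comes from the explicit bounds \eqref{one}--\eqref{two} in Proposition \ref{prop2:asym}, applied with $\phi=u$ and combined with the bound \eqref{forward} on $\|u\|_{L^\infty}$.

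Next we multiply by $\mu_a(x)$ and integrate in $d$ over $\mathbb S^{n-1}$. All the terms are bounded and measurable, so Fubini's theorem lets us reorder the $s$, $d$ and $d'$ integrals into the form written in \eqref{H:asymptotic}. The three leading terms then match the three displayed terms of the corollary. The remainder becomes $\mu_a(x)\int_{\mathbb S^{n-1}}R(x,d)\,ds(d)$, which is bounded by $\|\mu_a\|_{L^\infty}\,|\mathbb S^{n-1}|\,C\sup_d\tau_-^2(x,d)$.

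The main obstacle is interpreting the remainder, since $H$ integrates over all directions while $x\in\Omega_{d,\rho_0}$ only controls $\tau_-(x,d)$ for a single $d$. I would handle this by stating the error as $O(\max_{d}\tau_-^2(x,d))$, or equivalently by restricting to points $x$ for which $\tau_-(x,d)<\rho_0$ for every $d$ where the expansion of Theorem \ref{tD:asymptotic} is invoked. This is consistent with the uniformity asserted there, because the constant $C$ does not depend on $(x,d)$.

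One fallback is needed for directions where $\tau_-(x,d)\ge\rho_0$. There the remainder is still bounded by a constant, because each term in the expansion is bounded by $\|g\|_{L^\infty}(1+2\rho\|\mu\|_{L^\infty})$ and $u$ is bounded by \eqref{forward}. Any bounded quantity is $O(\tau_-^2)$ once $\tau_-$ is bounded below by $\rho_0$, so the $O(\tau_-^2)$ notation absorbs these directions, at the cost of a constant depending on $\rho_0$. With this, the remainder is uniform as stated.
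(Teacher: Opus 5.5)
Your route is the one the paper intends: it gives no separate proof and treats the corollary as Theorem~\ref{tD:asymptotic} multiplied by $\mu_a(x)$ and integrated over directions, using exactly your substitution $x'+sd=x-(\tau_-(x,d)-s)d$. You also correctly spot the issue the paper passes over.

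The gap is your final claim that the fallback makes ``the remainder uniform as stated.'' After the fallback you have $|R(x,d')|\le C_{\rho_0}\tau_-^2(x,d')$ for every $d'$, with $C_{\rho_0}$ of size $\rho_0^{-2}$. So the error in $H$ is only bounded by $C_{\rho_0}\|\mu_a\|_{L^\infty}\int_{\mathbb S^{n-1}}\tau_-^2(x,d')\,ds(d')$. This is not $O(\tau_-^2(x,d))$ for the fixed $d$ with $x\in\Omega_{d,\rho_0}$. Since $\tau_-(x,d')+\tau_-(x,-d')=\tau(x,d')$, you get $\int_{\mathbb S^{n-1}}\tau_-^2(x,d')\,ds(d')\ge\frac14\int_{\mathbb S^{n-1}}\tau^2(x,d')\,ds(d')$. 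That quantity is set by the chord lengths of $\Omega$ through $x$ and does not shrink as $\tau_-(x,d)\to0$; the fallback just hides an $O(1)$ error in a large constant. Your other option, requiring $\tau_-(x,d')<\rho_0$ for all $d'$, forces every chord through $x$ to be shorter than $2\rho_0$. For the unit square with $\rho_0=0.096$ in Section~\ref{sec:numerical}, no such $x$ exists.

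Better bookkeeping will not close this, because the statement read literally is false. Take $\mu_s\equiv0$, $\mu_a\equiv c>0$, $g\equiv1$. Then $u(x,d')=e^{-c\tau_-(x,d')}$ exactly, the scattering term vanishes, and
\[
H(x,g)-\big(\text{leading terms}\big)=c\int_{\mathbb S^{n-1}}\big(e^{-c\tau_-(x,d')}-1+c\,\tau_-(x,d')\big)\,ds(d').
\]
Let $x=x_b-\varepsilon\nu(x_b)$ with $x_b\in\partial\Omega$, and $d=-\nu(x_b)$. Then $\tau_-(x,d)=\varepsilon$, so $x\in\Omega_{d,\rho_0}$ for small $\varepsilon$. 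The integrand is nonnegative. On the directions with $\nu(x_b)\cdot d'>1/2$, $\tau_-(x,d')$ stays bounded below uniformly in $\varepsilon$, by the interior ball at $x_b$. So the left side stays away from $0$ while $\tau_-^2(x,d)=\varepsilon^2\to0$.

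What your argument actually proves is $H(x,g)=(\text{three terms})+\mu_a(x)\int_{\mathbb S^{n-1}}R(x,d')\,ds(d')$ with $|R(x,d')|\le C\tau_-^2(x,d')$. This matches the corollary only if its $O(\tau_-^2(x,d))$ is read as sitting inside the $d$-integral; the statement reuses $d$ as the integration variable, which invites that reading. In that reading, though, it gives no smallness for the error in $H$. You should state this weaker form explicitly rather than assert an error of size $\tau_-^2(x,d)$ for the single direction $d$.
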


 \section{Inverse Problem} \label{Sec:inverse}  

The goal in this section is to recover the coefficients $\mu_a$ and $\mu_s$ from the knowledge of 
$H(\cdot, g_j)|_{\Omega}, \, j=1, \dots, M,$ with $g_j \in L^\infty(\Gamma_-)$. The approach 
is based on the asymptotic expansion \eqref{H:asymptotic} derived in the previous section. Indeed, in 
the first step  we shall neglect the remainder $o(\tau_-^2(x,d))$, and consider only the three first terms in the 
asymptotic  expansion of the absorbed optical energy density $H(x,g)$.  The second step is to linearize
the system by factorizing  the term on the right-hand side of the equation by $\mu_a$, and then dividing
 the entire equation by the factor of  $\mu_a$. \\
 Set
\begin{eqnarray*}
m_0(x, g)=  \int_{\mathbb S^{n-1}}  g(x-\tau_-(x,d)d, d)   ds(d), \\
m_1(x, g) =   -\int_{\mathbb S^{n-1}} \int_0^{\tau_-(x,d)} \mu(x- sd) g(x-\tau_-(x,d)d, d) ds ds(d),\\
m_2(x,g) =   \int_{(\mathbb S^{n-1})^2} \hspace{-1mm} \int_0^{\tau_-(x,d)} \hspace{-2mm}k(d, d^\prime) g(x-\tau_-(x,d)d, d')  
 \mu_s(x-sd) ds ds(d) ds(d'). 
 \end{eqnarray*}

Linearizing \eqref{H:asymptotic}, we deduce the following approximation:
\begin{eqnarray}
\mu_a(x) \approx H(x,g) \left( \sum_{j=0}^2 m_j(x,g) \right)^{-1} \nonumber\\
\approx  m_0^{-1}(x,g)H(x,g)- m_0^{-2}(x,g)H(x,g)m_1(x,g)-m_0^{-2}(x,g)H(x,g)m_2(x,g). 
\label{approx1}
 \end{eqnarray}
Notice that since the illumination $g \geq 0$ is not trivial, the function $m_0(x,g)$ is non-zero.
The equation \eqref{approx1} is linear in the unknowns $(\mu_a, \mu_s)$,  and can be rewritten in the
following form: 
\begin{equation} \label{newintegral}
(I-T_{1,g})\mu_a +T_{2,g} \mu_s \approx  m_0^{-1}(x,g)H(x,g), \qquad  x\in \Omega_{\rho_0},
\end{equation}  
with $\Omega_{\rho_0} =  \cup_{d\in \mathbb S^{n-1}} \Omega_{d, \rho_0}$,
where $T_{k,g} : L^\infty(\Omega_{\rho_0}) \to L^\infty(\Omega_{\rho_0}), \, k=1, 2,$ are linear  integral operators: 
\begin{eqnarray}
T_{1,g} w =  m_0^{-2}(x,g)H(x,g) \int_{\mathbb S^{n-1}} \int_0^{\tau_-(x,d)} w(x- sd) g(x-\tau_-(x,d)d, d) ds ds(d), \label{T_1}\\
T_{2,g} w =  -T_{1,g} w  \nonumber \\ + m_0^{-2}(x,g)H(x,g) \hspace{-2mm}  \int_{(\mathbb S^{n-1})^2} \hspace{-2mm} \int_0^{\tau_-(x,d)} \hspace{-6mm}k(d, d^\prime) g(x-\tau_-(x,d)d, d')  
 w(x-sd) ds ds(d) ds(d').
\end{eqnarray}

The inverse problem  we consider here is to recover the coefficients  $(\mu_a, \mu_s)$, solution to the 
system 

\begin{equation} \label{newintegral2}
(I-T_{1,g_k})\mu_a +T_{2,g_k} \mu_s =  m_0^{-1}(x,g_k)H(x,g_k), \qquad  x\in \Omega_{\rho_0}, \; k=1, \cdots, M.
\end{equation}  

\section{Numerical results}\label{sec:numerical}

In this section, we illustrate numerically our results regarding the exponential depth-decay of the RTE solution $u$, the asymptotic approximation of $u$ and of the internal heat data $H$ near the illumination impact, as well as the solution $(\mu_a,\mu_s)$ of the linearized inverse problem in Eq.~\eqref{newintegral2}. Figure~\ref{fig:gtcig} shows the
problem setup.
\begin{figure}
    \centering
    \includegraphics[scale=0.5]{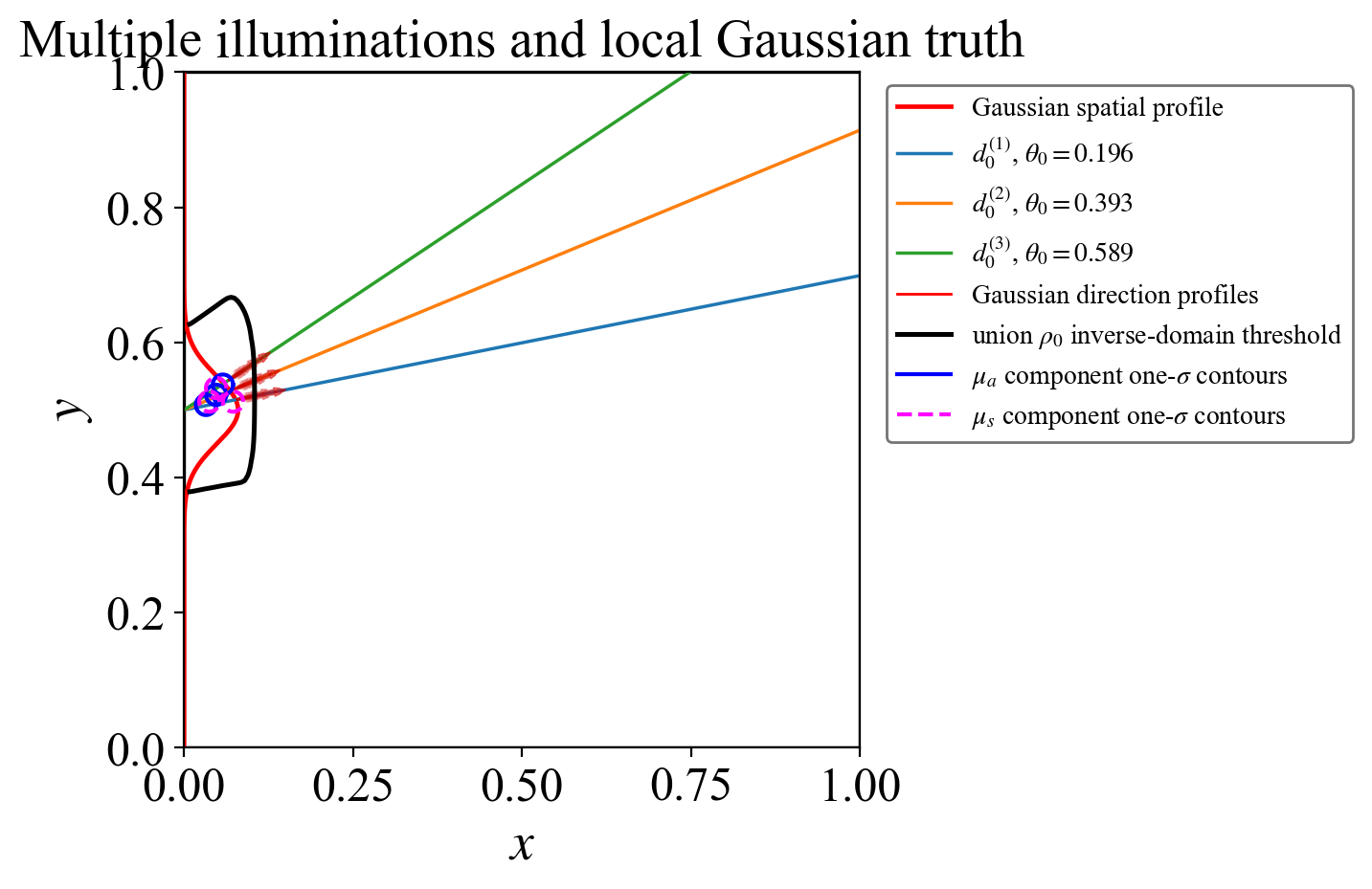}\\
    \includegraphics[scale=0.5]{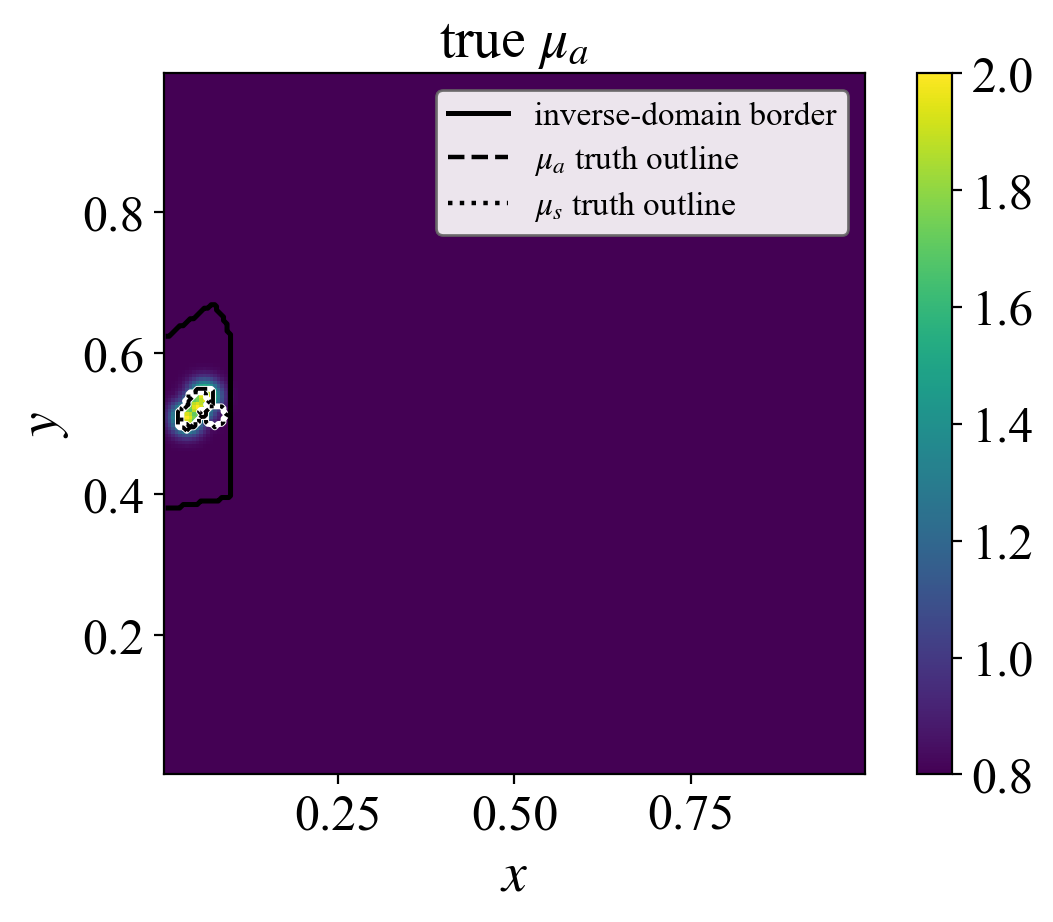}\includegraphics[scale=0.5]{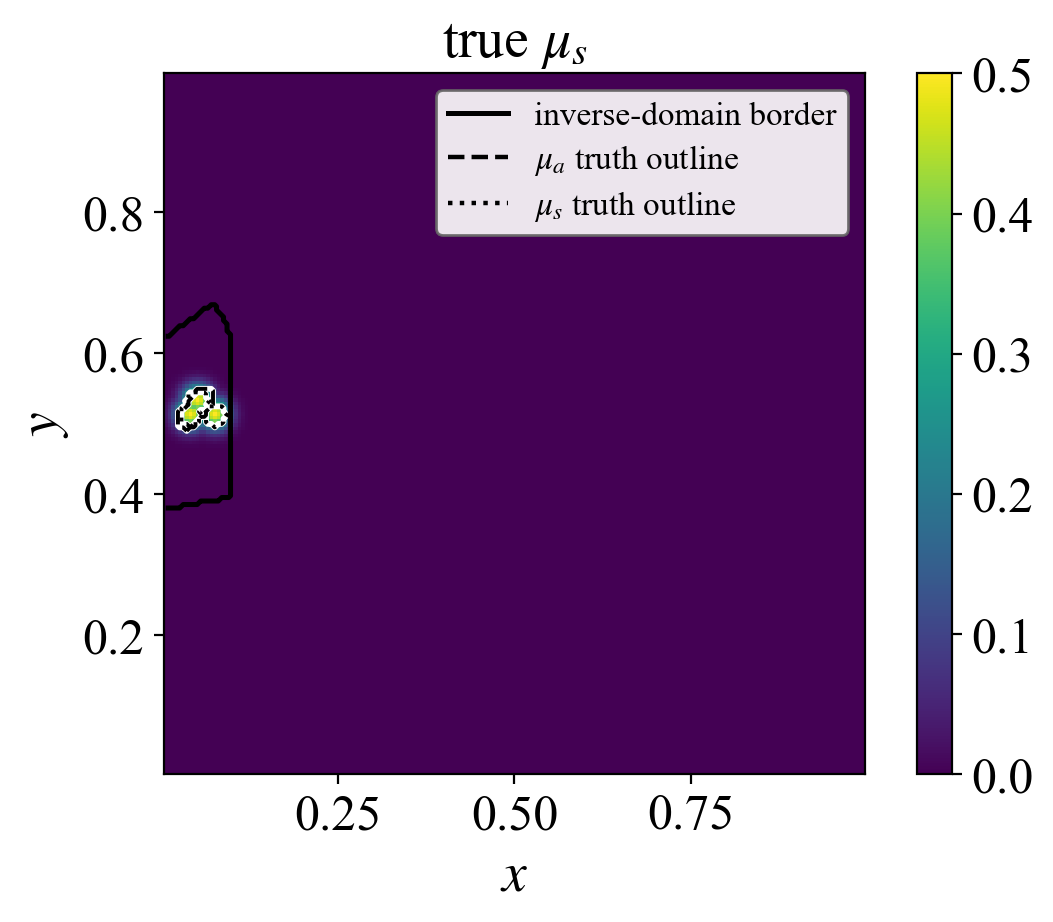}
    \caption{Top: three Gaussian boundary illuminations and the resulting local \(\rho_0\)-visible inverse region. The component outlines show the Gaussian truth components for \(\mu_a\) and \(\mu_s\). Bottom: ground-truth absorption and scattering coefficients.}
    \label{fig:gtcig}
\end{figure}
We let the domain be the unit square, $\Omega=(0,1)\times(0,1)$, with illumination supported on the left boundary. The incoming boundary data are localized both in the boundary variable and in direction. For
\(d(\theta)=(\cos\theta,\sin\theta)\), the \(j\)-th illumination is given by
\[
g_j((0,y),d(\theta))
=
A
\exp\!\left(
-\frac{(y-y_c)^2}{2\sigma_y^2}
\right)
\exp\!\left(
-\frac{\operatorname{ang}(\theta-\theta_{0,j})^2}{2\sigma_\theta^2}
\right)
\mathbf{1}_{\{\cos\theta>0\}},
\]
where '\(\operatorname{ang}\)' is the wrapped angular difference. In the
computations below, we set $A=1$, $y_c=0.5$, $\sigma_y=0.05$, and $\sigma_\theta=0.1$. Thus the beam is centered at the midpoint of the illuminated boundary and is
directed into \(\Omega\). We use three illuminations in the inverse problem, with central angles $\theta_{0,1}=\pi/16$, $\theta_{0,2}=\pi/8$, and $\theta_{0,3}=3\pi/16$, corresponding to the central-ray directions $d_{0,1}\approx(0.9808,0.1951)$, $d_{0,2}\approx(0.9239,0.3827)$, and $d_{0,3}\approx(0.8315,0.5556)$.
The second illumination, \(\theta_{0,2}=\pi/8\), is used as the reference illumination for the one-dimensional plots along the central ray. This ray starts at \(x_0=(0,0.5)\), exits through the right boundary, and has exit distance $\rho_{\mathrm{exit}}=1/\cos(\pi/8)\approx 1.0824$. The optical coefficients are chosen to be localized near the illuminated part
of the boundary, in the region where the local asymptotic approximation is
valid. Specifically, the ground truth is generated as a collection of three
small Gaussian components for each coefficient:
\[
\mu_a(x)
=
\mu_{a,\mathrm{bg}}
+
\bigl(\mu_{a,\mathrm{peak}}-\mu_{a,\mathrm{bg}}\bigr)
\max_{\ell=1,2,3}\phi^a_\ell(x),
\]
and
\[
\mu_s(x)
=
\mu_{s,\mathrm{bg}}
+
\bigl(\mu_{s,\mathrm{peak}}-\mu_{s,\mathrm{bg}}\bigr)
\max_{\ell=1,2,3}\phi^s_\ell(x).
\]
Here each component is an isotropic Gaussian normalized on the computational
grid \(\Omega_h\):
\[
\phi^a_\ell(x)
=
\frac{
\exp\!\left(-|x-c^a_\ell|^2/(2\sigma_a^2)\right)
}{
\max_{z\in\Omega_h}
\exp\!\left(-|z-c^a_\ell|^2/(2\sigma_a^2)\right)
},
\]
\[
\phi^s_\ell(x)
=
\frac{
\exp\!\left(-|x-c^s_\ell|^2/(2\sigma_s^2)\right)
}{
\max_{z\in\Omega_h}
\exp\!\left(-|z-c^s_\ell|^2/(2\sigma_s^2)\right)
}.
\]
We use $\mu_{a,\mathrm{bg}}=0.8$, $\mu_{a,\mathrm{peak}}=2.0$,
and $\mu_{s,\mathrm{bg}}=0$, $\mu_{s,\mathrm{peak}}=0.5$. The Gaussian widths are $\sigma_a=\sigma_s=0.015$. The component centers \(c^a_\ell\) are placed inside the local
\(\rho_0\)-visible inverse region associated with the three illuminations, at depths $0.36\rho_0$, $0.55\rho_0$, and $0.74\rho_0$
along the central illumination directions. Similarly, the centers \(c^s_\ell\) are placed at $0.40\rho_0$, $0.59\rho_0$, and $0.78\rho_0$ along the central illumination directions. For the present parameter values, all one-standard-deviation neighbourhoods
of the Gaussian truth components lie inside the active inverse region on the computational grid. The depth bound used for the validity of the asymptotic expression of Section~3 is
\[
\rho_0
=
\min\left\{
1.79\,\|\mu\|_{L^\infty(\Omega)}^{-1},
\;
0.6\,\|\mu\|_{L^\infty(\Omega)}^{-2}
\|g\|_{L^\infty(\Gamma_-)}^{-1}
\right\}=0.096.
\]
We solve the inverse problem only in the local region determined by this value
of \(\rho_0\). For the \(j\)-th illumination, define
\[
m_{0,j}(x)
=
\int_{\mathbb S^1}
g_j\!\left(x-\tau_-(x,d)d,\ d\right)\,ds(d).
\]
We also define the normalized \(\rho_0\)-reach function
\[
Q_j(x)
=
\frac{
\displaystyle
\int_{\mathbb S^1}
\mathbf{1}_{\{\tau_-(x,d)\leq \rho_0\}}
g_j\!\left(x-\tau_-(x,d)d,\ d\right)\,ds(d)
}{
\displaystyle
\max_{z\in\Omega_h}
\int_{\mathbb S^1}
\mathbf{1}_{\{\tau_-(z,d)\leq \rho_0\}}
g_j\!\left(z-\tau_-(z,d)d,\ d\right)\,ds(d)
}.
\]
The active inverse region for illumination \(j\) is then
\[
R_j=\left\{x\in\Omega_h:\ Q_j(x)\geq 0.05\right\}\cap\{x\in\Omega_h:\ m_{0,j}(x)\geq 0.01\max_{z\in\Omega_h}m_{0,j}(z)\},
\]
and the reconstruction region is $R=\bigcup_{j=1}^3 R_j$. Outside \(R\), the coefficients are fixed at their background values. Thus the
unknowns are introduced only where the incoming illumination is sufficiently
strong and where the \(\rho_0\)-scale asymptotic model is used. The angular variable is discretized with \(N_d=64\) uniformly spaced
directions,
\[
    d_m=(\cos\theta_m,\sin\theta_m),
    \qquad
    \Delta\theta=\frac{2\pi}{N_d}.
\]
The spatial grid used for the figures has \(N_x=N_y=100\). The scattering phase function is the two-dimensional Henyey--Greenstein kernel, discretized as a matrix normalized by
\[
    \Delta\theta
    \sum_{m'=1}^{N_d}
    k(d_m,d_{m'})
    =
    1,
    \qquad m=1,\ldots,N_d.
\]
For the reconstruction figures we take the anisotropy parameter \(\eta=0.6\). Figure~\ref{fig:t0eta} shows \(\mathfrak t_0\) as a function of \(\eta\) in the present case.
\begin{figure}
    \centering
    \includegraphics[scale=0.5]{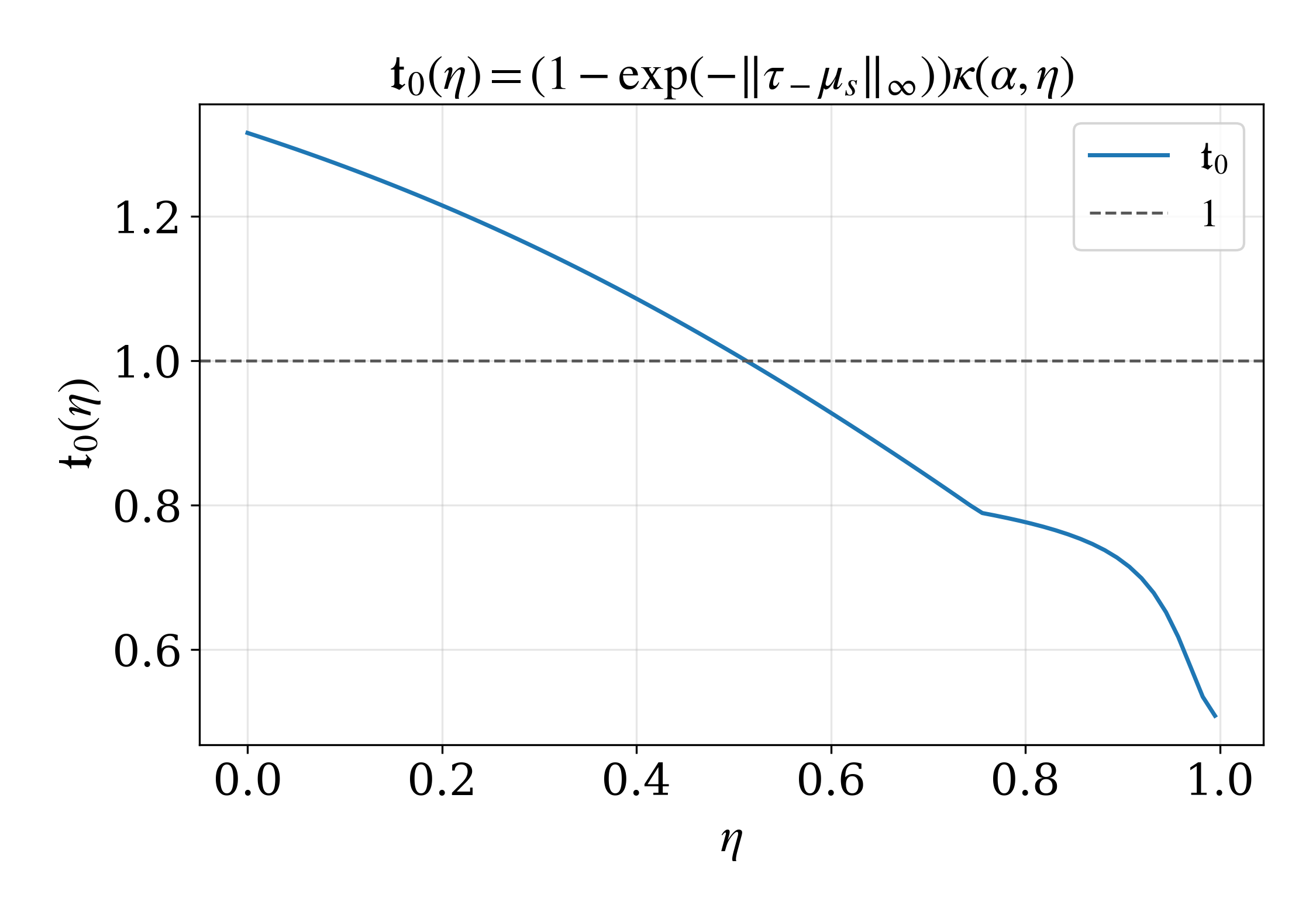}
    \caption{Illustration of the existence of \(\eta_0\) of Theorem~\ref{main1}. The value $\eta=0.6$, chosen for the numerical experiment, is in the region $\mathfrak t_0(\eta)<1$ where the RTE solution $u$ decays exponentially with depth.}
    \label{fig:t0eta}
\end{figure}
It demonstrates numerically the existence of \(\eta_0\) in
Theorem~\ref{main1}, such that \(\mathfrak t_0(\eta)<1\) for all
\(\eta>\eta_0\), and such that the RTE solution $u$ is guaranteed to decay exponentially with depth. Next, Figure~\ref{fig:uexp} shows the
exponential decay of the directional full-RTE field $u(x_0+\rho d_{0,2},d_{0,2})$ along the reference central ray.
\begin{figure}
    \centering
    \includegraphics[scale=0.5]{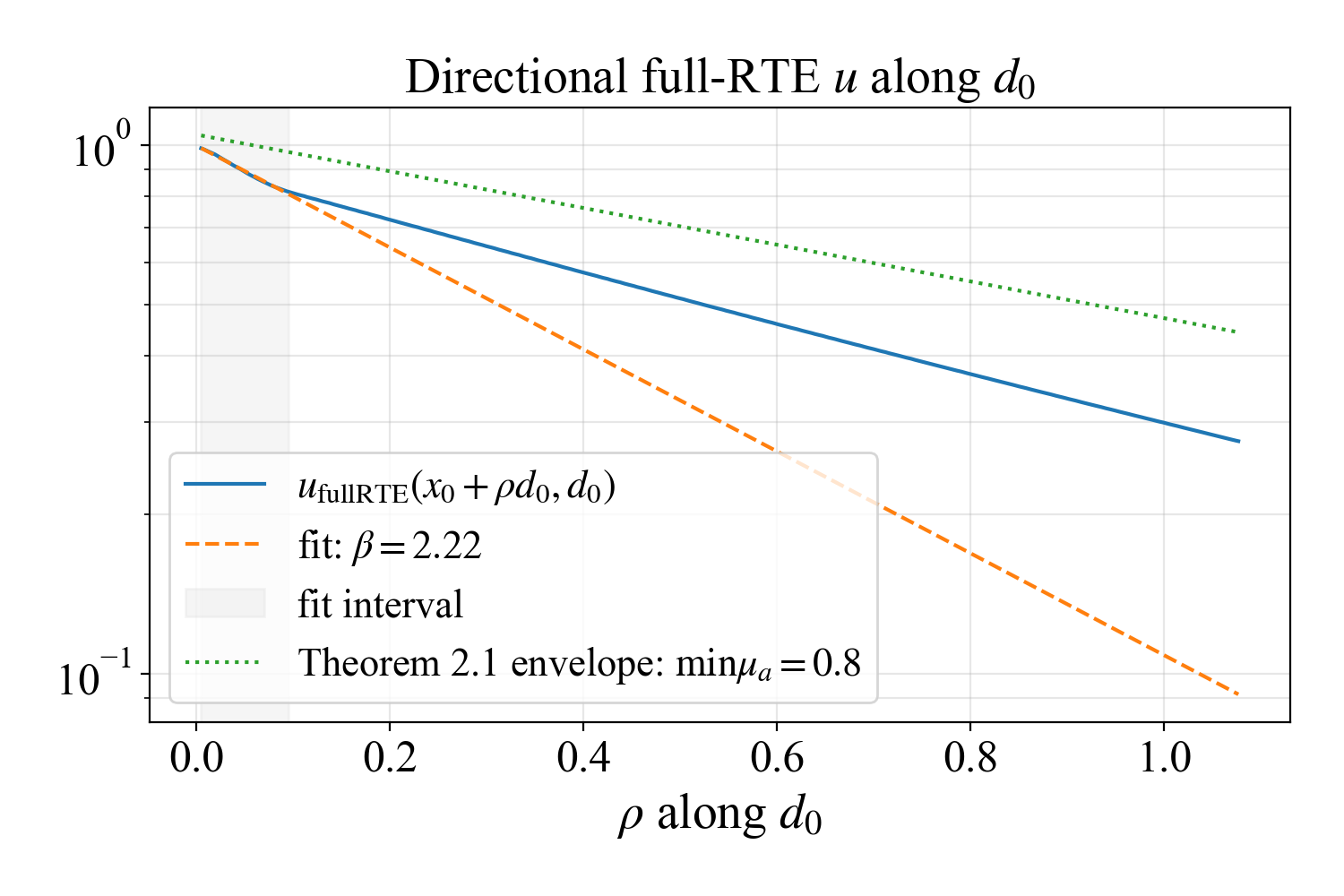}
    \caption{Exponential decay of the directional full-RTE field \(u\) along the reference central ray.}
    \label{fig:uexp}
\end{figure}

To compute the full RTE solution, we implemented a solver in the class of upwind-difference, discrete-ordinates transport methods used in optical tomography~\cite{Klose}, using source iteration with prescribed incoming boundary illumination. Recall that \(\mu=\mu_a+\mu_s\). The discrete phase
matrix is row-normalized as above. The source iteration is initialized by the
ballistic component, that is, the solution obtained by omitting the scattering
source. In continuous form this component is
\[
u_{\mathrm{bal}}(x,d)
=
g\!\left(x-\tau_-(x,d)d,d\right)
\exp\!\left(
-\int_0^{\tau_-(x,d)}
\mu(x-sd)\,ds
\right).
\]
In the implementation, \(u_{\mathrm{bal}}\) is approximated on the Cartesian
grid by solving
\[
d_m\cdot\nabla u_{\mathrm{bal}}(\cdot,d_m)
+
\mu u_{\mathrm{bal}}(\cdot,d_m)
=
0
\]
for each discrete direction \(d_m\), using first-order upwind finite
differences with the prescribed incoming boundary data. Each source iteration
then computes
\[
d_m\cdot\nabla u_m^{(\ell+1)}(x)
+
\mu(x)u_m^{(\ell+1)}(x)
=
\mu_s(x)\,
\Delta\theta
\sum_{m'=1}^{N_d}
k(d_m,d_{m'})
u_{m'}^{(\ell)}(x),
\]
again with the prescribed incoming boundary values. The iteration is
terminated when
\[
\frac{
\left(
\displaystyle\sum_{m=1}^{N_d}
\left\|
u_m^{(\ell+1)}-u_m^{(\ell)}
\right\|_{\ell^2_h(\Omega)}^2
\right)^{1/2}
}{
\left(
\displaystyle\sum_{m=1}^{N_d}
\left\|
u_m^{(\ell+1)}
\right\|_{\ell^2_h(\Omega)}^2
\right)^{1/2}
}
<
\varepsilon_{\mathrm{RTE}},
\qquad
\varepsilon_{\mathrm{RTE}}=10^{-7}.
\]

Next, in figures~\ref{fig:u_asymptotic_validation} and~\ref{fig:H_asymptotic_validation} we investigate the validity of the asymptotic approximation of the RTE solution $u$ and of the
internal heat map from Theorem~\ref{tD:asymptotic} and Corollary~\ref{tH:asymptotic}, respectively. The comparison with the
full-RTE solution is used only as a diagnostic test of the forward
asymptotics. The inverse reconstruction below is based on the asymptotic datum \(H_{\mathrm{asymp}}\), not on the full-RTE internal datum.
\begin{figure}
    \centering
    \includegraphics[width=0.49\linewidth]{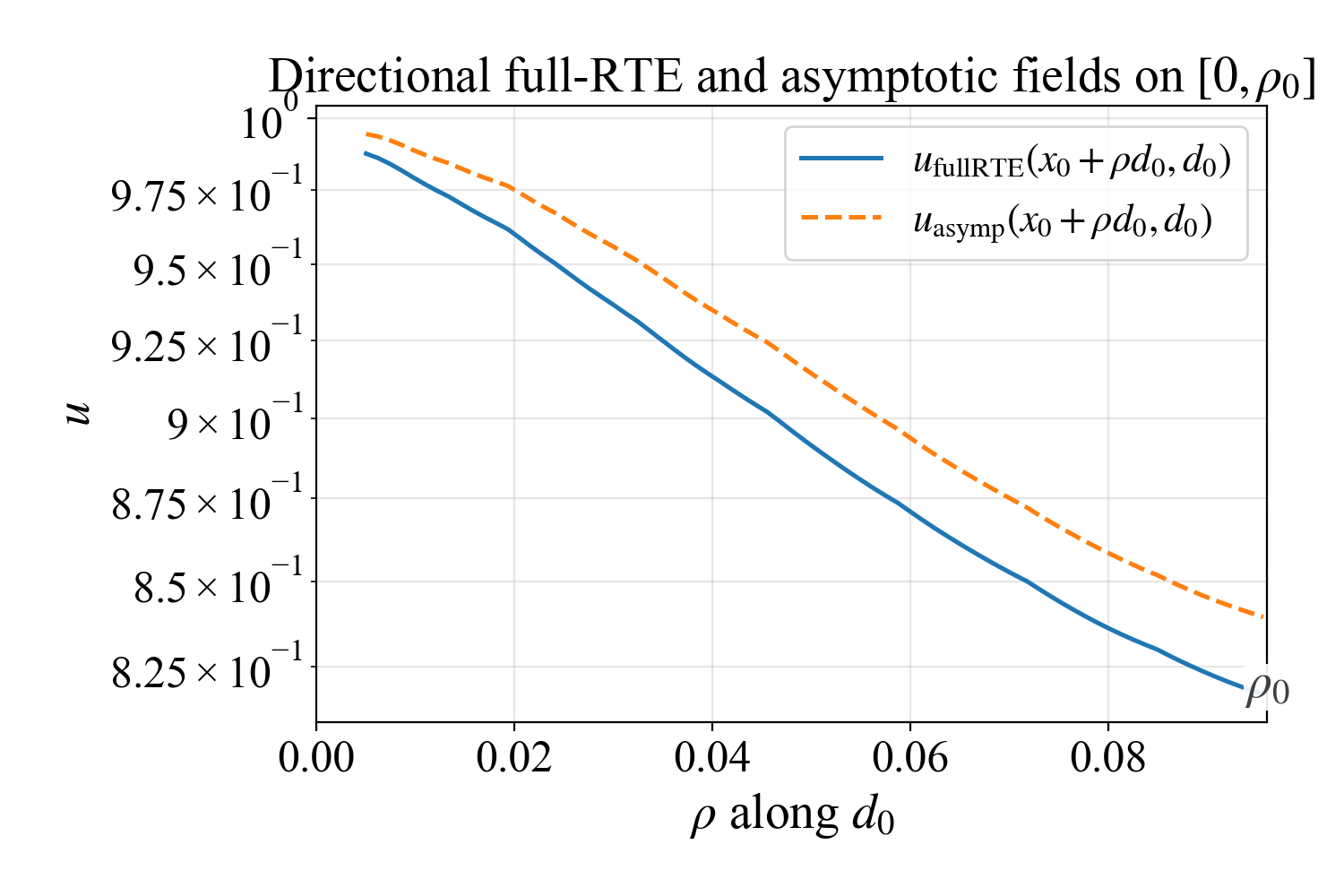}
    \includegraphics[width=0.49\linewidth]{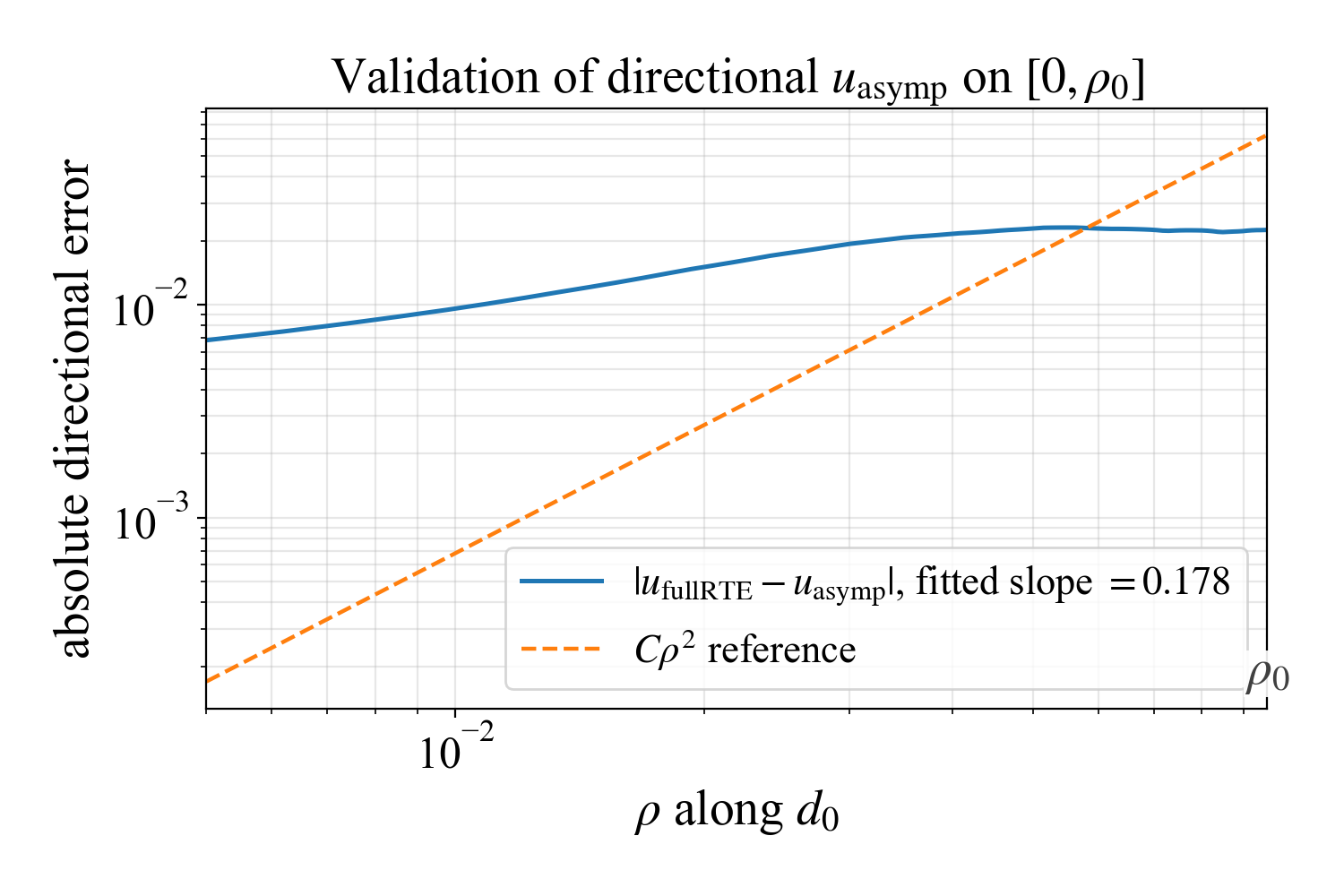}
    \caption{Directional field along the reference central ray. Left: full-RTE field and local asymptotic approximation. Right: absolute asymptotic error as a function of depth.}
    \label{fig:u_asymptotic_validation}
\end{figure}
\begin{figure}
    \centering
    \includegraphics[width=0.49\linewidth]{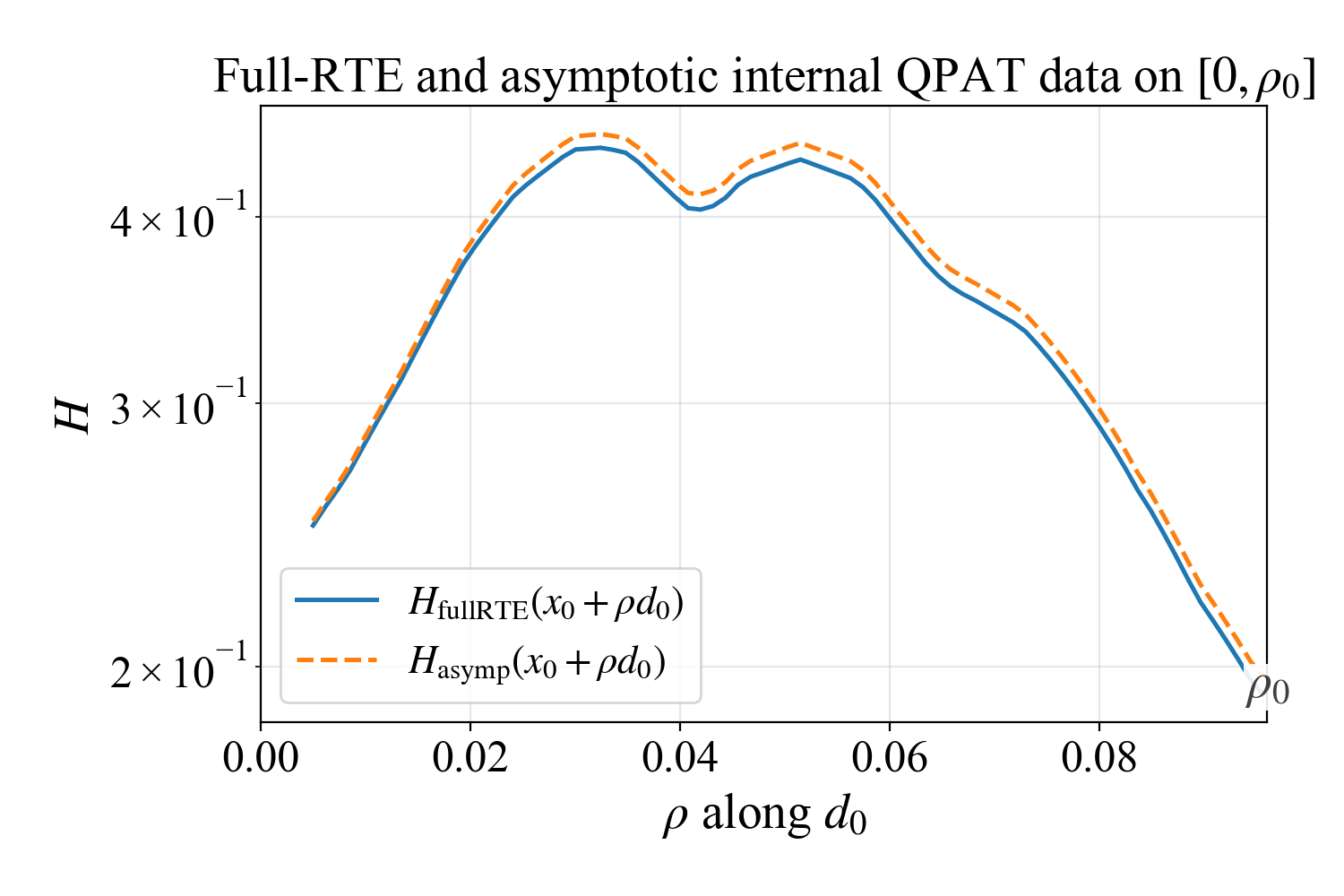}
    \includegraphics[width=0.49\linewidth]{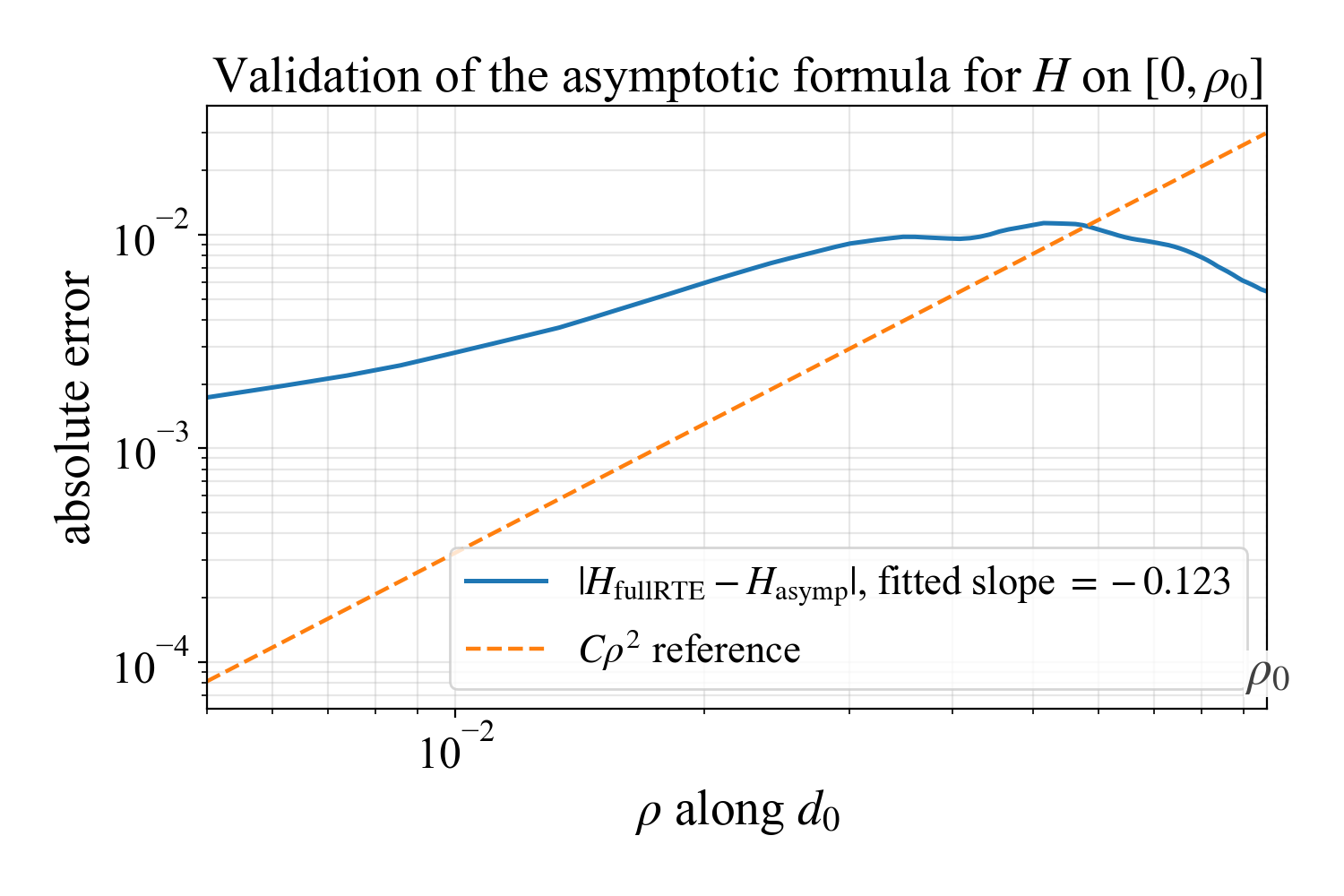}
    \caption{Internal heat data along the reference central ray. Left: full-RTE internal datum and asymptotic internal datum. Right: absolute asymptotic error as a function of depth.}
    \label{fig:H_asymptotic_validation}
\end{figure}
Both asymptotic expansions perform well within the $\rho_0$-defined thin layer close to the illumination impact. The slopes of the residuals are proportional to $\rho^p$, with $0<p<2$, as $\rho\rightarrow0$, while the residuals are of the order $\rho^2$ in Theorem~\ref{tD:asymptotic} and Corollary~\ref{tH:asymptotic}. However, this is not unexpected, since the comparison combines the asymptotic truncation error with discretization error from the first-order upwind transport solver and finite-grid sampling near the boundary.

Finally, we solve the linearized inverse problem in Eq.~\eqref{newintegral2}
on the local reconstruction region \(R\). The absorption coefficient is reconstructed pixelwise on \(R\). The scattering coefficient is represented in terms of a finite linear combination of Gaussians,
\[
    \mu_s(x) = \mu_{s,\mathrm{bg}} + \sum_{q=1}^{Q} c_q b_q(x),
    \qquad c_q\geq 0,
\]
whose centers are placed on a Cartesian lattice covering the
reconstruction region. In the present computation, the candidate lattice has size \(11\times31\), and
the Gaussian widths are $\sigma_{b,x}=\sigma_{b,y}=0.015$. The equations for the three illuminations are stacked. In continuous notation, the resulting least-squares problem is
\begin{equation}\label{lin}
\min_{\mu_a,\{c_q\}}\sum_{j=1}^{3}\left\|(I-T_{1,g_j})\mu_a+
T_{2,g_j}\left(\mu_{s,\mathrm{bg}}+\sum_{q=1}^{Q} c_q b_q
\right)-m_{0,j}^{-1}H_j\right\|_{L^2(R_j)}^2+\mathcal R(\mu_a,\{c_q\}),
\end{equation}
where \(\mathcal R\) denotes the small zeroth-order and gradient
regularization terms used for numerical stability. The coefficient bounds are
imposed during the solve.

The forward full-RTE computation on the \(100\times100\times64\) grid takes
approximately 11 minutes. The inverse problem itself is solved only on the
small local reconstruction region and takes approximately 5 seconds on a
standard desktop. The reconstruction results are shown in
Figures~\ref{fig:mua_reconstruction} and~\ref{fig:mus_reconstruction}. The pointwise relative error in the reconstruction of $\mu_a$ is on the order of $2\%$. As
expected from Eq.~\eqref{newintegral2}, the reconstruction of \(\mu_s\) is
more ill-conditioned than that of \(\mu_a\). Indeed, \(\mu_a\) enters the linearized equation through the leading identity term, whereas \(\mu_s\) enters only through the integral operator \(T_{2,g_j}\). We finally remark that, although the linearization~\eqref{approx1} introduces another approximation into the forward model, our numerical investigation has shown that it also significantly stabilizes the  solution of the inverse problem. In fact, we obtained better reconstructions of the coefficients $\mu_a$ and $\mu_s$ by minimizing the linearized functional in~\eqref{lin} than by solving the nonlinear optimization problem associated with the original asymptotic forward model.

\begin{figure}
\begin{center}
    \includegraphics[scale=0.37]{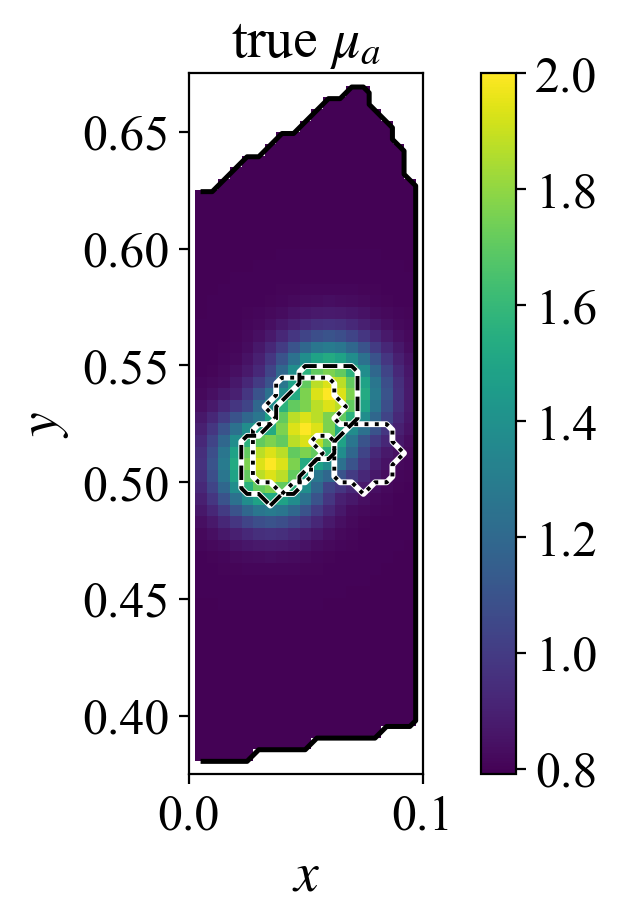}
    \includegraphics[scale=0.37]{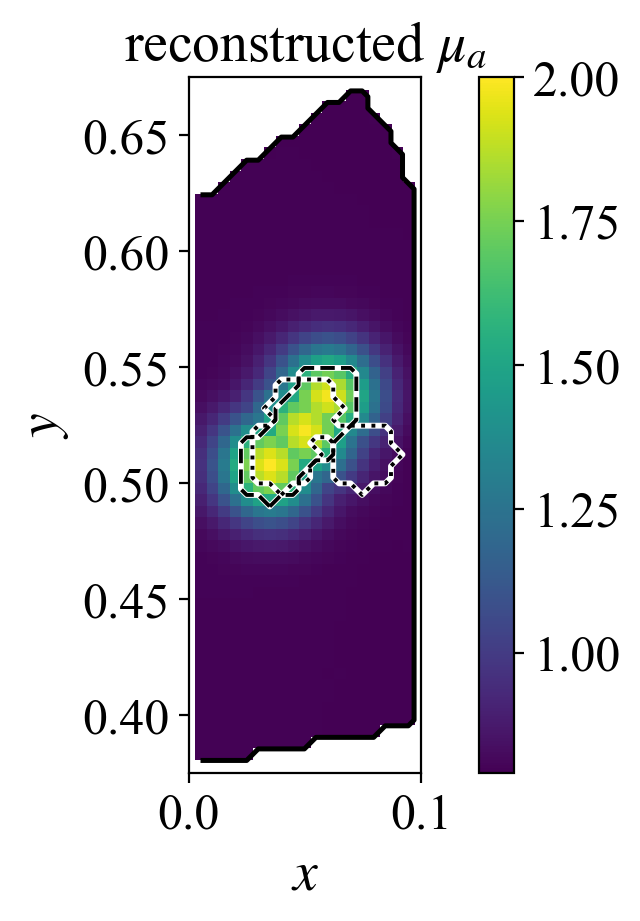}
    \includegraphics[scale=0.37]{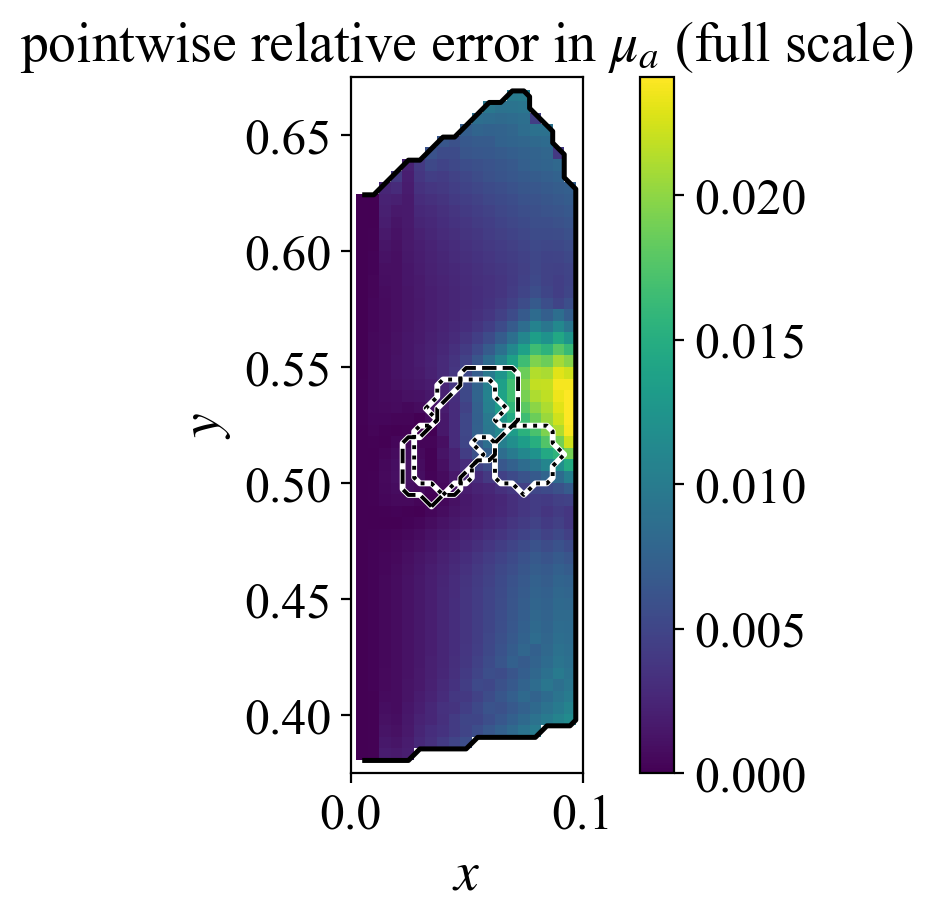}
    \includegraphics[scale=0.35]{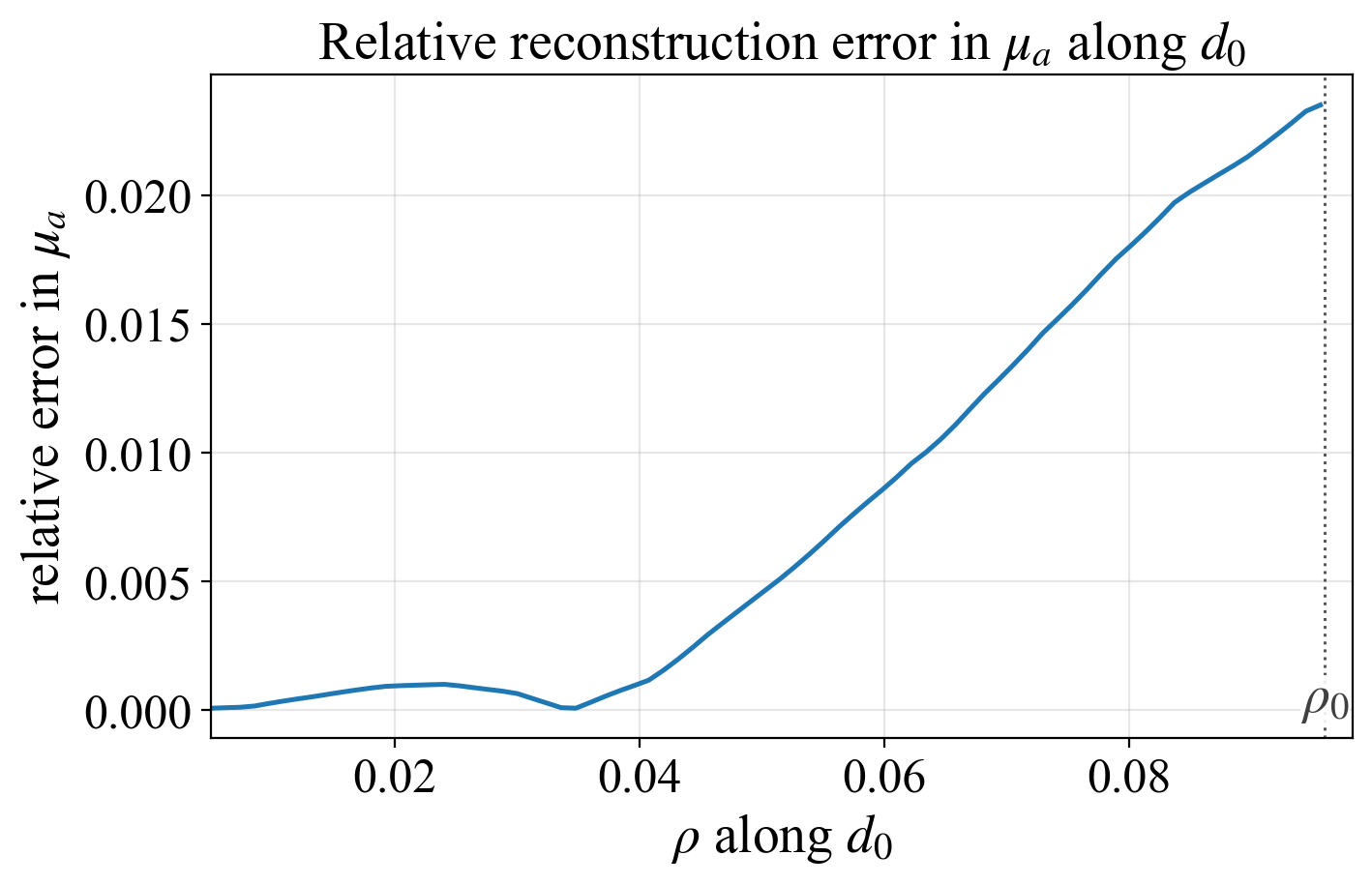}
\end{center}
\caption{Reconstruction of \(\mu_a\) for \(\eta=0.6\). From left to right: ground truth in the reconstruction window, reconstruction, relative error, and relative error along the reference central ray.}
\label{fig:mua_reconstruction}
\end{figure}

\begin{figure}
\begin{center}
    \includegraphics[scale=0.37]{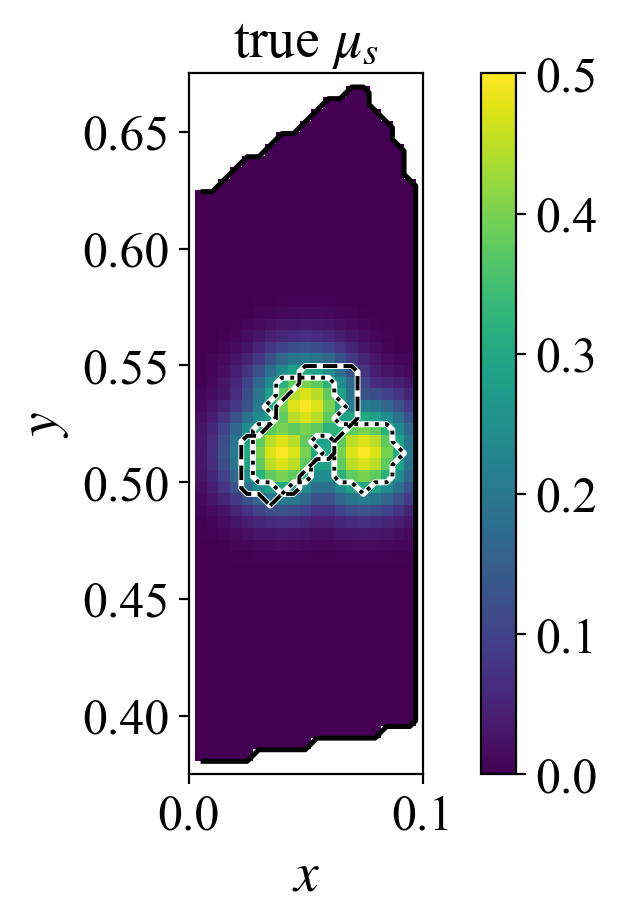}
    \includegraphics[scale=0.37]{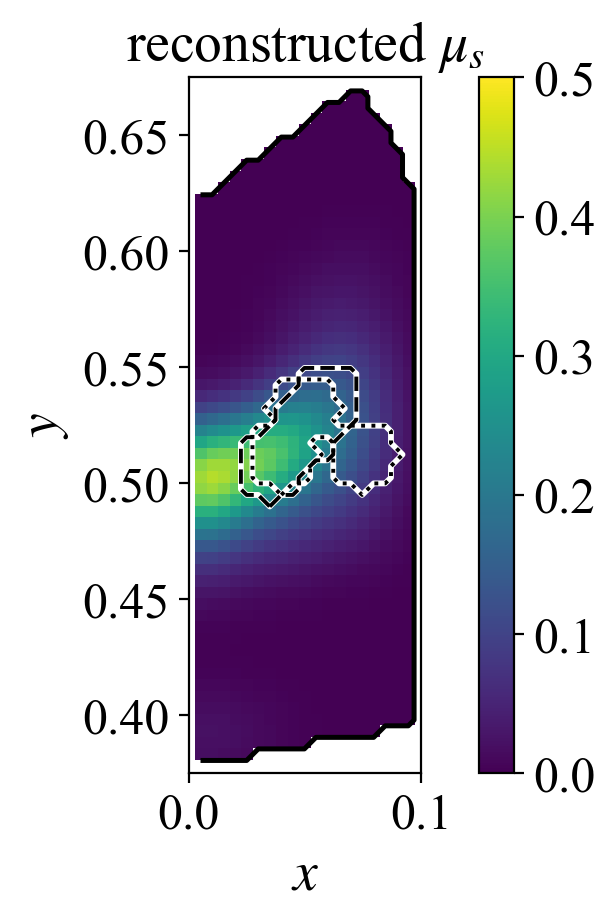}
    \includegraphics[scale=0.37]{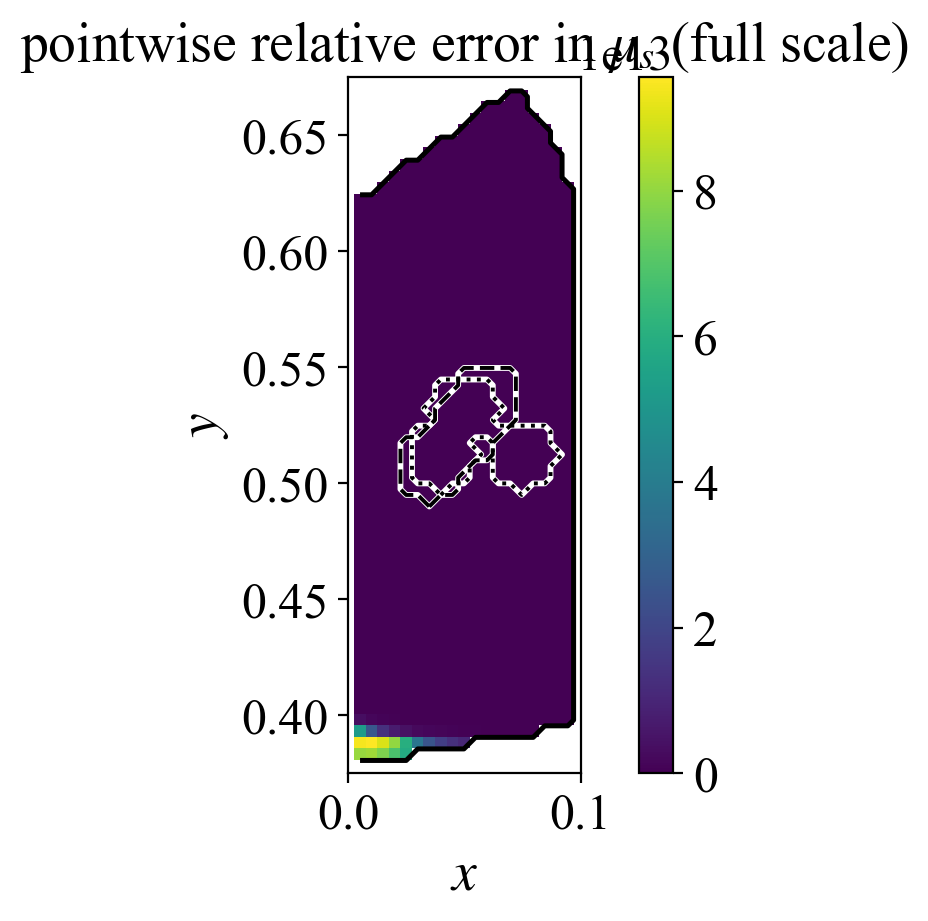}
\end{center}
\caption{Reconstruction of \(\mu_s\) for \(\eta=0.6\). From left to right: ground truth in the reconstruction window, reconstruction, and relative error.}
\label{fig:mus_reconstruction}
\end{figure}

\bibliographystyle{abbrv}
\bibliography{refs} 

\end{document}